\newcommand\quant{\advance\quantno by1
                      \ifnum\quantno=1\qquad\else\quad\fi\forall }
\def\rr{{\mathbb R}}
\def\rn{{{\rr}^n}}
\def\cc{{\mathbb C}}
\def\nn{{\mathbb N}}
\def\zz{{\mathbb Z}}
\def\cm{{\mathcal M}}
\def\cn{{\mathcal N}}
\def\cp{\mathcal{D}}
\def\cq{{\mathcal Q}}
\def\crz{{\mathcal R}}
\def\fz{\infty}
\def\BMO{{\mathop\mathrm{\,BMO}}}
\def\lz{\lambda}
\def\kz{{\kappa}}
\def\tz{\theta}
\def\wz{\widetilde}
\def\ls{\lesssim}
\def\boz{\Omega}
\def\pz{{\prime}}
\def\gfz{\genfrac{}{}{0pt}{}}
\def\dint{\displaystyle\int}
\def\dfrac{\displaystyle\frac}
\def\r{\right}
\def\lf{\left}
\newtheorem{theorem}{Theorem}[section]
\newtheorem{lemma}[theorem]{Lemma}
\newtheorem{proposition}[theorem]{Proposition}
\newtheorem{corollary}[theorem]{Corollary}
\theoremstyle{definition}
\newtheorem{remark}{\it Remark}[section]
\newtheorem{definition}{Definition}[section]
\numberwithin{equation}{section}
\begin{document}

\arraycolsep=1pt

\title{{\vspace{-5cm}\small\hfill\bf Math. Z. (to appear)}\\
\vspace{4.5cm}\bf\Large Dyadic sets, maximal functions and applications on
$ax+b$\,--groups
\footnotetext{\hspace{-0.22cm}L. Liu $\cdot$ D. Yang (Corresponding author)\endgraf
School of Mathematical Sciences, Beijing Normal University,
Laboratory of Mathematics\endgraf
and Complex Systems, Ministry of
Education, Beijing 100875, People's Republic of China
\endgraf
e-mail: dcyang@bnu.edu.cn
\medskip
\endgraf
L. Liu
\endgraf
({\it Present address}) Department of Mathematics,
School of Information,
Renmin University \endgraf
of China, Beijing 100872, People's Republic of China\endgraf
e-mail: liguangbnu@gmail.com
\medskip\endgraf
M. Vallarino\endgraf
Dipartimento di Matematica e Applicazioni, Universit\`{a} di
Milano--Bicocca, via R. Cozzi 53,\endgraf
20125 Milano, Italy\endgraf
e-mail: maria.vallarino@unimib.it}}
\author{Liguang Liu, Maria Vallarino, Dachun Yang}
\date{}

\maketitle

\noindent{\bf Abstract}\quad Let $S$ be the Lie group
${\mathbb R}^n\ltimes {\mathbb R}$, where ${\mathbb R}$
acts on ${\mathbb R}^n$ by dilations,
endowed with the left-invariant
Riemannian symmetric space structure and the right Haar measure
$\rho$, which is a Lie group of exponential growth. Hebisch and
Steger in [Math. Z. 245(2003), 37--61] proved that any integrable
function on $(S,\rho)$ admits a Calder\'on--Zygmund decomposition
which involves a particular family of sets, called
Calder\'on--Zygmund sets. In this paper, we show the
existence of a dyadic grid in the group $S$, which has {nice} properties
similar to the classical Euclidean dyadic cubes. Using the
properties of the dyadic grid, we prove a
Fefferman--Stein type inequality, involving the dyadic Hardy--Littlewood
maximal function and the dyadic sharp function. As a consequence, we obtain a complex interpolation
theorem involving the Hardy space $H^1$ and the space
${\mathop\mathrm{\,BMO\,}}$
introduced in [Collect. Math. 60(2009), 277--295].

\medskip

\noindent{\bf Keywords}\quad Exponential growth group,
Dyadic set, Complex interpolation, Hardy space, $\BMO$

\medskip

\noindent{\bf Mathematics Subject Classification (2000)}\quad 22E30, 42B30, 46B70

\medskip

\section{Introduction} \label{s: Introduction}

Let $S$ be the {\it Lie group} $\rn\ltimes \rr$ endowed with the
following {\it product}: for all $(x, t)$, $(x', t')\in S$,
$$(x, t)\cdot(x', t')\equiv (x+e^tx', t+t')\,.$$
The group $S$ is also called an
{\emph{$ax+b$--group}}. Clearly, $o= (0, 0)$ is the {\it identity} of $S$.
We endow $S$ with the {\it left-invariant Riemannian metric}
$$ds^2 \equiv e^{-2t}(dx_1^2+\cdots+dx_n^2)+dt^2\,,$$
and denote by $d$ the {\it corresponding metric}. This coincides with the
metric on the hyperbolic space $H^{n +1}(\rr)$. For all $(x, t)$ in
$S$, we have
\begin{equation}\label{1.1}
\cosh d\big((x, t), o\big)= \frac{  e^t+e^{-t}+e^{-t}|x|^2  }{2}.
\end{equation}
The group $S$ is nonunimodular. The {\it right and left Haar measures} are
given by
$$
d\rho(x, t)\equiv\,dx\,dt \quad {\rm{and}}
\quad d\lz(x, t)\equiv  e^{-nt}\,dx\,dt.
$$
Throughout the whole paper, we work on the {\it triple $(S, d, \rho)$},
namely, the group $S$ endowed with the left-invariant Riemannian
metric $d$ and the right Haar measure $\rho$. For all $(x, t)\in S$
and $r>0$, we denote by $B\big((x,t), r\big)$ the {\it ball centered at
$(x,t)$ of radius $r$}. In particular, it is well known that the
right invariant measure of the ball $B(o, r)$ has the following behavior
 \begin{eqnarray*}
\rho\big(B(o, r)\big)\sim\left\{\begin{array}{ll}
r^{n+1}\quad\quad&\text{if\quad $r<1$}\\
e^{nr}&\text{if\quad $r\ge 1$}.
\end{array}\right.
\end{eqnarray*}
Thus, $(S, d, \rho)$ is a space of exponential growth.

Throughout this paper, we denote by $L^p$ the {\it Lebesgue space $L^p(\rho)$} and by
$\|\cdot\|_{L^p}$ its {\it quasi-norm}, for all $p\in (0, \fz]$. We also
denote by $L^{1,\,\fz}$ the {\it Lorentz space $L^{1,\,\fz}(\rho)$}
and by $\|\cdot\|_{L^{1,\, \fz}}$ its {\it quasi-norm}.

Harmonic analysis on exponential growth groups recently attracts a
lot of attention. In particular, many efforts have been made to
study the theory of singular integrals on the space $(S,d,\rho)$.

In the remarkable paper \cite{hs}, Hebisch and Steger developed a new
Calder\'on--Zygmund theory which holds in some spaces of exponential
growth, in particular in the space $(S,d,\rho)$. The main idea of
\cite{hs} is to replace the family of balls which is used in the
classical Calder\'on--Zygmund theory by a suitable family of
rectangles which we call Calder\'on--Zygmund sets (see
Section \ref{s2} for their {definitions}). We let $\crz$
denote the {\it family of all Calder\'on--Zygmund sets}.

The Hardy--Littlewood maximal function associated with $\crz$ is of
weak type $(1,1)$ (see \cite{gs, va1}).
In \cite{hs}, it  was proven that every
integrable function on $(S,d,\rho)$ admits a Calder\'on--Zygmund
decomposition involving the family $\crz$. As a consequence, a
theory for singular integrals holds in this setting. In particular,
every integral operator bounded on $L^2$ whose kernel satisfies a
suitable integral H\"ormander's condition is of weak type $(1,1)$.
Interesting examples of singular integrals in this setting are
spectral multipliers and Riesz transforms associated with a
distinguished Laplacian $\Delta$ on $S$, which have been studied by
numerous authors in, for example, \cite{as, cghm, gqs, gas, hs, heb3, mt, sjo, sv}.

Vallarino \cite{va} introduced an atomic Hardy space $H^1$ on the
group {$(S, d,\rho)$,} defined by atoms supported in Calder\'on--Zygmund sets
instead of balls, and a corresponding $\BMO$ space, which enjoy some
properties of the classical Hardy and $\BMO$ spaces (see \cite{cw2, fest,
st93}). More precisely, it was proven that the dual of $H^1$ may be
identified with $\BMO$, that singular integrals whose kernel
satisfies a suitable integral H\"ormander's condition are bounded
from $H^1$ to $L^1$ and from $L^{\infty}$ to $\BMO$. Moreover,  for
every $\theta\in (0,1)$, the real interpolation space
$[H^1,L^2]_{\theta, q}$ is equal to $L^q$ if $\frac{1}{q} =1-\frac{\theta}{2}$,
and $[L^2,\BMO]_{\theta, p}$ is equal to $L^p$ if
$\frac{1}{p} = \frac{1-\theta}{2}$. The complex interpolation
spaces between $H^1$ and $L^2$ and between $L^2$ and $\BMO$
are not identified in \cite{va}.

In this paper, we introduce a dyadic grid of Calder\'on--Zygmund sets
on $S$, which we denote by $\cp$ and which can be considered as the
analogue of the family of classical dyadic cubes (see Theorem
\ref{t3.1} below). Recall that dyadic sets in the context of spaces
of homogeneous type were also introduced by Christ \cite{c}; his
construction used the doubling condition of the considered measure, so it
cannot be adapted to the current setting. {In the $ax+b$\,--groups, the main
tools we use to construct such a dyadic grid are some nice
splitting properties of the Calder\'on--Zygmund sets and an
effective method to construct a ``parent" of a given
Calder\'on--Zygmund set (see Lemma \ref{l3.1} below).
More precisely, given a Calder\'on--Zygmund set $R$, we
find a bigger Calder\'on--Zygmund set $M(R)$ which can be split into
at most $2^n$ sub-Calder\'on--Zygmund sets such that one of these
subsets is exactly $R$ and each of these subsets has measure
comparable to the measure of $R$.} To the best of our knowledge,
this is the first time that a family of dyadic sets appears in a
space of exponential growth. The dyadic grid $\cp$ turns out to be a
useful tool to study the analogue of maximal singular integrals (see
\cite{hyy}) on the space $(S,d,\rho)$, which will be investigated
in a forthcoming paper \cite{lvy}.

By means of the dyadic collection $\cp$, in Section 4 below, we
prove a relative distributional inequality
involving the dyadic Hardy--Littlewood maximal function and the
dyadic sharp maximal function on $S$, which implies a
Fefferman--Stein type inequality involving those
maximal functions; see Stein's book \cite[Chapter IV, Section
3.6]{st93} and Fefferman--Stein's paper \cite{fest} for
the analogous inequality in the Euclidean setting. The previous
inequality is the main ingredient to prove that the complex
interpolation space $(L^2,\BMO)_{[\theta]}$ is equal to
$L^{p_{\theta}}$ if $\frac{1}{p_{\theta}}= \frac{1-\theta}{2}$ and
$(H^1,L^2)_{[\theta]}$ is equal to $L^{q_{\theta}}$ if
$\frac{1}{q_{\theta}}= 1-\frac{\theta}{2}$. This implies complex
interpolation results for analytic families of operators
(see Theorems \ref{t5.2} and \ref{t5.3} below). {In particular,}
the complex interpolation result for analytic families of operators
involving $H^1$ could be interesting and useful to obtain endpoint
growth estimates of the solutions to the wave equation associated
with the distinguished Laplacian $\Delta$ on $ax+b$--groups, as was
pointed out {by M\"uller and Vallarino} \cite[Remark 6.3]{mv}.

We remark that the corresponding complex interpolation results for
the classical Hardy and $\BMO$ spaces were proven by Fefferman and
Stein \cite{fest}. Recently, an $H^1$--$\BMO$ theory was developed by
Ionescu \cite{i} for noncompact symmetric spaces of rank $1$ and,
more generally, by Carbonaro, Mauceri and Meda \cite{cmm} for metric
measure spaces which are nondoubling and satisfy suitable geometric
assumptions. In those papers, the authors proved a Fefferman--Stein type
inequality for the maximal functions associated with the family of balls
of small radius: the main ingredient in their proofs is an isoperimetric property
which is satisfied by the spaces studied in \cite{i, cmm}. As a consequence,
the authors in \cite{i, cmm} obtained some complex interpolation results
involving a Hardy space defined only by means of atoms supported in small balls
and a corresponding $\BMO$ space. Notice that the space $(S,d,\rho)$
which we study here does not satisfy the
isoperimetric property (\cite[(2.2)]{cmm}). Moreover, we
consider atoms supported both in {``small'' and ``big'' } sets.
Then we have to use different methods to obtain a suitable
Fefferman--Stein inequality and complex interpolation results
involving $H^1$ and $\BMO$.

Due to the existence of the dyadic collection $\cp$, it makes sense
to define a dyadic $\BMO_\cp$ space and its predual dyadic Hardy
space $H_\cp^1$ on $S$ (see Definitions
\ref{dyadic-hardy} and \ref{dyadic-bmo} below). Though in Theorem
\ref{dyadic-hardy and hardy} below, it is proven that $H_\cp^1$ is a
proper subspace of $H^1$, the complex interpolation result given by
$H_\cp^1$ and $L^2$ is the same as that given by $H^1$ and $L^2$;
see Remark \ref{dyadic-complex-interpolation} below.

Finally, we make some conventions on notations. Set
$\zz_+\equiv\{1,\,2,\,\cdots\}$ and $\nn=\zz_+\cup\{0\}$. In the
following, $C$ denotes a {\it positive finite constant} which may vary
from line to line and may depend on parameters according to the
context. Constants with subscripts do not change through the whole
paper. Given two quantities $f$ and $g$, by $f\ls g$, we mean that
there exists a positive constant $C$ such that  $f\le Cg$. If $f\ls
g\ls f$, we then write $f\sim g$. For any bounded linear operator
$T$ from a Banach space $A$ to a Banach space $B$, we denote by
$\|T\|_{A\to B}$ its {\it operator norm}.

\section{Preliminaries}\label{s2}

We first recall the definition of Calder\'on--Zygmund sets which appears
in \cite{hs} and implicitly in \cite{gs}. In the sequel, we  denote
by $\cq$ the {\it collection of all dyadic cubes in $\rn$}.

\begin{definition}\label{d2.1}
A {\it Calder\'on--Zygmund set} is a set $R\equiv Q\times[t-r, t+r)$, where
$Q\in\cq$ with side length $L$, $t\in\rr$, $r>0$ and
\begin{eqnarray*}
&&e^2\,e^tr\le L<e^8\,e^tr\quad\quad \mbox{if}\quad r<1,\\
&&e^t\,e^{2r}\le L<e^t\,e^{8r}\quad\quad\mbox{if}\quad r\ge1.
\end{eqnarray*}
We set $t_R\equiv t$, $r_R\equiv r$ and $x_R\equiv(c_Q, t)$, where
$c_Q$ is the {\it center} of $Q$. For a Calder\'on--Zygmund set $R$,
its {\it dilated set} is defined as $R^*\equiv \{x\in S:\, d(x, R)<r_R\}$.
Denote by $\crz$ the family of all Calder\'on--Zygmund sets on $S$.
For any $x\in S$, denote by $\crz(x)$
the family of the sets in $\crz$ which contain $x$.
\end{definition}

\begin{remark}\label{r2.1}\rm
For any set $R\equiv Q\times[t-r, t+r)\in\crz$, we have that
$$\rho(R)=\dint_Q\dint_{t-r}^{ t+r} \,ds\,dx=2r|Q|=2rL^n,$$
where $|Q|$  and $L$ denote the Lebesgue measure and the side length
of $Q$, respectively.
\end{remark}

The following lemma presents some properties of the
Calder\'on--Zygmund sets.

\begin{lemma}\label{l2.1}
Let all the notation be as in Definition \ref{d2.1}. Then there
exists a constant $\kz_0\in[1, \fz)$ such that for all $R\in\crz$,
the following hold:
\begin{itemize}
\vspace{-0.25cm}
\item[(i)] $B(x_R, r_R)\subset R\subset B(x_R, \kz_0r_R)$;
\vspace{-0.25cm}
\item[(ii)] $\rho(R^\ast)\le\kz_0\rho(R)$;
\vspace{-0.25cm}
\item[(iii)] every $R\in\crz$ can be decomposed into
mutually disjoint sets $\{R_i\}_{i=1}^k$, with $k=2$ or $k=2^n$,
$R_i\in \crz$, such that $R=\cup_{i=1}^k R_i$ and
$\rho(R_i)=\rho(R)/k$ for all $i\in\{1,\cdots, k\}$.
\end{itemize}
\end{lemma}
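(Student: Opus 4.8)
The plan is to verify the three statements by direct computation with the explicit description $R = Q\times[t-r,t+r)$, $Q\in\cq$ with side length $L$, together with the formula \eqref{1.1} for the distance to the origin and left-invariance of $d$. For part (i), I would use that for a point $(y,s)\in R$ one has $|y-c_Q|\le \sqrt{n}\,L/2$ and $|s-t|<r$; translating on the left by $x_R^{-1}=(-e^{-t}c_Q,-t)$ sends $R$ to a set containing $o$ and reduces the two inclusions to estimating $\cosh d((z,u),o)$ for $|z|\lesssim e^{-t}L$ and $|u|<r$ via \eqref{1.1}. Plugging the constraint relating $L$, $e^t r$ and $r$ (the two regimes $r<1$ and $r\ge1$ of Definition \ref{d2.1}) into \eqref{1.1} yields that $d((z,u),o)<\kz_0 r$ for a suitable absolute $\kz_0$ and, conversely, that $B(x_R,r_R)\subset R$ because the ball of radius $r_R$ about $o$ is contained in a thin box in the $t$-direction of half-height comparable to $r$ and in the $x$-directions of half-width comparable to $e^t r \le e^{-2}L < L/2$ (so it fits inside $Q$); the constants $e^2$ and $e^8$ in Definition \ref{d2.1} are exactly what make both inclusions work.

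For part (ii), note $R^*=\{x: d(x,R)<r_R\}$; by the triangle inequality and part (i), $R^*\subset B(x_R,(\kz_0+1)r_R)$, and one checks from \eqref{1.1} and left-invariance that a ball $B(x_R,c\,r_R)$ is comparable in $\rho$-measure to $R$, with a constant depending only on $c$ and $n$ (the key point being that in the region of interest $\rho$ behaves polynomially when $r<1$ and like $e^{nr}$ when $r\ge1$, matching $\rho(R)=2rL^n\sim 2r(e^t r)^n$ resp.\ $\sim 2r(e^t e^{cr})^n$). This gives $\rho(R^*)\le \kz_0\rho(R)$ after enlarging $\kz_0$ if necessary.

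For part (iii), the decomposition is obtained by bisecting $R$ either in the $t$-direction or in the $x$-directions. If the splitting the interval $[t-r,t+r)$ into $[t-r,t)$ and $[t,t+r)$ keeps the pieces Calder\'on--Zygmund sets — which happens precisely when the new radius $r/2$ still satisfies the inequalities of Definition \ref{d2.1} with the same $Q$ — we take $k=2$ and $\rho(R_i)=rL^n=\rho(R)/2$. Otherwise, halving $r$ would force $L$ out of its admissible window, so instead I dyadically subdivide $Q$ into its $2^n$ children $Q_i$ of side length $L/2$, keep the same interval $[t-r,t+r)$, and set $R_i=Q_i\times[t-r,t+r)$; one checks that with side length $L/2$ the pair $(L/2, e^t r)$ (or $(L/2, e^t e^{2r})$) lies in the admissible window exactly in the complementary case, so each $R_i\in\crz$, they are pairwise disjoint, their union is $R$, and $\rho(R_i)=2r(L/2)^n=\rho(R)/2^n$. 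The mild obstacle is bookkeeping the two regimes $r<1$ and $r\ge1$ (and the borderline $r\simeq1$) simultaneously and confirming that in each case at least one of the two bisection strategies produces admissible children; this is where the specific exponents $2$ and $8$ in Definition \ref{d2.1} are used, since they guarantee the window $[e^2,e^8)$ (resp.\ length-$6$ window in the exponent) is wide enough that halving $L$ or $r$ always lands back inside it in the appropriate case.
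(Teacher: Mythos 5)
Your treatment of (i) is sound, and your case analysis for (iii) --- bisect the $t$-interval when the two halves remain admissible, otherwise bisect $Q$ into its $2^n$ dyadic children --- is exactly the argument the paper sketches (following \cite{hs,va}); the only caveat there is that admissibility of the half-intervals involves the new centres $t\pm r/2$ as well as the new radius $r/2$, not merely ``the same inequalities with $r/2$ and the same $Q$''.

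Your argument for (ii), however, has a genuine gap. You enclose $R^*$ in $B(x_R,(\kappa_0+1)r_R)$ and then assert that $\rho\big(B(x_R,c\,r_R)\big)$ is comparable to $\rho(R)$ with a constant depending only on $c$ and $n$. This is false when $r_R\ge 1$: by left-invariance of $d$ and the behaviour of $\rho$ under left translations, $\rho\big(B(x_R,cr)\big)=e^{nt_R}\rho\big(B(o,cr)\big)\sim e^{nt_R}e^{ncr}$, while $\rho(R)=2rL^n\le 2r\,e^{nt_R}e^{8nr}$. Since the constant $\kappa_0$ in (i) is forced to be large (the corner of $Q$ at height $t_R$ has $\cosh d\gtrsim n\,e^{-2t_R}L^2$, which with $L$ close to $e^{t_R}e^{8r}$ gives $d\approx 16r$), the ratio $\rho\big(B(x_R,(\kappa_0+1)r)\big)/\rho(R)\gtrsim e^{n(\kappa_0-7)r}/r$ is unbounded in $r$. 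This is precisely the non-doubling feature of $(S,d,\rho)$: enlarging the radius of a large ball by any factor $>1$ multiplies its measure by a factor exponential in $r$, so no containment of $R^*$ in a dilated ball can yield (ii).

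The correct route is to enclose $R^*$ in a slightly enlarged \emph{box}. If $(y,s)\in R^*$, write $(y,s)=(y_0,s_0)\cdot(z,u)$ with $(y_0,s_0)\in R$ and $d\big((z,u),o\big)<r$. From \eqref{1.1}, $e^{-u}|z|^2<2\cosh r-2\cosh u$, whence $|u|<r$ and $|z|^2<e^u(2\cosh r-2)\le(e^r-1)^2$. Therefore $s\in[t_R-2r,t_R+2r)$ and $|y-y_0|=e^{s_0}|z|<e^{t_R+r}(e^r-1)\le e^{t_R}e^{2r}\le L$ when $r\ge1$ (and $|y-y_0|<e^{t_R+r}\cdot er\le e^2e^{t_R}r\le L$ when $r<1$). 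Hence $R^*\subset\widetilde Q\times[t_R-2r,t_R+2r)$, where $\widetilde Q$ is the cube concentric with $Q$ of side length $3L$, and $\rho(R^*)\le 4r(3L)^n=2\cdot3^n\rho(R)$. The point you missed is that the additive metric enlargement by $r$ widens $Q$ by at most one side length $L$ in each direction, rather than by a multiple of the radius of the circumscribed ball.
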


We refer the reader to \cite{hs, va} for the proof of Lemma \ref{2.1}.
We recall here an idea of the proof of the property (iii) from \cite{hs,va}.
Given a Calder\'on--Zygmund set $R\equiv Q\times [t-r,t+r)$,
when the side length $L$ of $Q$ is sufficiently large
with respect to $e^t$ and $r$, it suffices to decompose $Q$ into
$2^n$ smaller dyadic Euclidean cubes $\{Q_1,\cdots,Q_{2^n}\}$ and
define $R_i\equiv Q_i\times [t-r,t+r)$ for $i\in\{1,\cdots,2^n\}$. Otherwise, it suffices
to split up the interval $[t-r,t-r)$ into two disjoint sub-intervals
$\{I_1, I_2\}$, which have the same measure, and define
$R_i\equiv Q\times I_i$ for $i\in\{1, 2\}$. This construction gives rise
to either $2^n$ or $2$ smaller Calder\'on--Zygmund sets
satisfying the property (iii) above.

\smallskip

The Hardy--Littlewood maximal function associated to
the family $\crz$ is defined as follows.

\begin{definition}\label{d2.2}
For any locally integrable function $f$ on $S$, the {\it Hardy--Littlewood
maximal function $\cm f$} is defined by
\begin{equation}\label{2.1}
\cm
f(x)\equiv\sup_{R\in\crz(x)}\dfrac1{\rho(R)}\dint_R|f|\,d\rho\qquad
{ \forall\ } x\in S\,.
\end{equation}
\end{definition}
The maximal operator $\cm$ has the following boundedness properties
\cite{gs, hs, va1}.

\begin{proposition}\label{p2.1}
The Hardy--Littlewood maximal
operator $\cm$ is bounded from $L^1$ to
$L^{1,\,\fz}$, and also bounded on $L^p$ for all $p\in(1, \fz]$.
\end{proposition}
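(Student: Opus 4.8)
The plan is to prove Proposition \ref{p2.1} by the standard route: first establish the weak type $(1,1)$ bound and the trivial $L^\infty$ bound, then interpolate to get the $L^p$ bounds for $p\in(1,\infty)$. The $L^\infty$ estimate is immediate from the definition \eqref{2.1}, since each average $\frac1{\rho(R)}\int_R|f|\,d\rho$ is bounded by $\|f\|_{L^\infty}$, so $\|\cm f\|_{L^\infty}\le\|f\|_{L^\infty}$. Once the weak $(1,1)$ bound $\rho(\{x\in S:\cm f(x)>\lz\})\le C\lz^{-1}\|f\|_{L^1}$ is in hand for all $\lz>0$, the Marcinkiewicz interpolation theorem yields boundedness on $L^p$ for every $p\in(1,\infty)$, with the endpoint $p=\infty$ already proved directly.

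The heart of the matter is thus the weak type $(1,1)$ estimate, and this is where the geometry of $\crz$ must be exploited rather than a doubling property of $\rho$ (which fails here). The approach I would take is a Vitali-type covering argument adapted to Calder\'on--Zygmund sets. Fix $\lz>0$ and $f\in L^1$, and set $E_\lz\equiv\{x\in S:\cm f(x)>\lz\}$. For each $x\in E_\lz$ choose $R_x\in\crz(x)$ with $\frac1{\rho(R_x)}\int_{R_x}|f|\,d\rho>\lz$, so that $\rho(R_x)<\lz^{-1}\int_{R_x}|f|\,d\rho\le\lz^{-1}\|f\|_{L^1}$; in particular the radii $r_{R_x}$ are uniformly bounded. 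Using Lemma \ref{l2.1}(i), each $R_x$ is comparable to the ball $B(x_{R_x},r_{R_x})$, and one can pass to the dilated sets $R_x^*\equiv\{y\in S:d(y,R_x)<r_{R_x}\}$, which by Lemma \ref{l2.1}(ii) satisfy $\rho(R_x^*)\le\kz_0\rho(R_x)$. A standard $5r$-type selection lemma (valid in any metric space, using the uniform bound on the radii and the comparison $R_x\subset B(x_{R_x},\kz_0 r_{R_x})$) produces a countable pairwise disjoint subfamily $\{R_{x_j}\}_j$ such that the dilates $\{R_{x_j}^*\}_j$ — suitably enlarged by a fixed factor depending only on $\kz_0$ — cover $E_\lz$. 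Then
\begin{equation*}
\rho(E_\lz)\le\sum_j\rho\bigl(\wz R_{x_j}^{\,*}\bigr)\ls\sum_j\rho(R_{x_j})\le\frac1\lz\sum_j\int_{R_{x_j}}|f|\,d\rho\le\frac1\lz\|f\|_{L^1},
\end{equation*}
where the last inequality uses disjointness of the $R_{x_j}$; this is exactly the claimed weak $(1,1)$ bound.

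The main obstacle is the covering step: in a nondoubling space the usual Besicovitch/Vitali machinery is delicate, and one must be careful that enlarging a Calder\'on--Zygmund set (or replacing it by a ball) stays within a family whose measure is controlled by a fixed multiple of the original. The key technical points are the uniform bound on the radii $r_{R_x}$ coming from $\rho(R_x)<\lz^{-1}\|f\|_{L^1}$ together with the lower bound $\rho(B(x_R,r_R))\le\rho(R)$, and the fact — encoded in Lemma \ref{l2.1}(i)--(ii) — that a dilate of a Calder\'on--Zygmund set has measure at most $\kz_0$ times that of the set, so that enlargements can be absorbed into constants. Since this proposition is essentially known (the excerpt cites \cite{gs,hs,va1}), I would keep the argument brief, emphasizing that the only inputs beyond elementary measure theory are Lemma \ref{l2.1} and a covering lemma in the metric space $(S,d)$, and that the constant in the weak $(1,1)$ bound depends only on $\kz_0$ and the dimension $n$.
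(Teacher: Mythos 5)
The paper does not prove Proposition \ref{p2.1}; it cites \cite{gs,hs,va1}. Your outer skeleton (trivial $L^\infty$ bound, weak type $(1,1)$, Marcinkiewicz interpolation) is the standard and correct reduction, but the covering argument you offer for the weak type $(1,1)$ bound has a genuine gap at exactly the point where the nondoubling geometry matters. First, the claim that $\rho(R_x)<\lz^{-1}\|f\|_{L^1}$ forces the radii $r_{R_x}$ to be uniformly bounded is false: by Lemma \ref{l2.1}(i) one only gets $\rho(B(x_R,r_R))\le\rho(R)$, and since $\rho$ is the \emph{right} Haar measure while balls are \emph{left} translates of $B(o,r)$, one has $\rho(B((y,t),r))=e^{nt}\rho(B(o,r))$; taking $t\to-\infty$ produces admissible sets of arbitrarily large radius and arbitrarily small measure. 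Without a uniform bound on the radii the greedy selection in the $5r$-lemma cannot even be started.

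Second, and more fundamentally, the $5r$-covering lemma covers $E_\lz$ by the \emph{metric} dilates $B(x_{R_j},5\kz_0 r_{R_j})$, and in this setting $\rho(B(o,cr))\sim e^{ncr}$ for $r\ge1$, so a fixed metric dilation multiplies the measure by a factor that is exponentially large in $r$ --- this is precisely the failure of doubling, and it cannot be ``absorbed into constants''. Lemma \ref{l2.1}(ii) does not rescue this: it controls only the additive enlargement $R^*=\{x: d(x,R)<r_R\}$, whose reach is $r_R$, whereas an intersecting admissible set $R'$ with $r_{R'}\le r_R$ has diameter up to $2\kz_0 r_{R'}$ with $\kz_0\ge1$, so $R'\not\subset R^*$ in general. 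What the cited proofs actually use is an engulfing property formulated \emph{inside the family} $\crz$ --- exploiting the coupling $L\sim e^te^{cr}$ between the Euclidean width of $Q$ and the height $2r$, so that an admissible set meeting $R$ and not larger than it is contained in an admissible-type enlargement of $R$ of comparable measure --- together with a splitting into a ``local'' part ($r<1$, where $\rho$ is locally doubling) and a ``global'' part ($r\ge1$) treated via the product structure of the rectangles. As written, your argument would prove weak type $(1,1)$ for the maximal function over all metric balls, which is not what Proposition \ref{p2.1} asserts and is not expected to hold with these techniques; the restriction to the family $\crz$ is essential and must enter the covering step, not just the final bookkeeping.
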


By Proposition \ref{p2.1}, Lemma \ref{l2.1} and a stopping-time argument,
Hebisch and Steger \cite{hs} showed that
any integrable function $f$ on $S$ at any level $\alpha>0$ has a
Calder\'on--Zygmund decomposition $f=g+\sum_ib_i$, where $|g|$ is a
function almost everywhere bounded by $\kz_0 \alpha$ and functions
$\{b_i\}_i$ have vanishing integral and are supported in sets of the
family $\crz$. This was proven to be a very useful tool
in establishing the boundedness of some multipliers and
singular integrals in \cite{hs}, and the theory of the
Hardy space $H^1$ on $S$ in \cite{va}.

Lemma {\ref{l2.1}(iii)} states that given a Calder\'on--Zygmund set, one
can split it up into a finite number of disjoint subsets which are
still in $\crz$. We shall now study how, starting from a given
Calder\'on--Zygmund set $R$, one can obtain a bigger set containing
it which is still in $\crz$ and whose measure is comparable to the
measure of $R$.
\begin{definition}\label{d3.1}
For any $R\in\crz$, $M(R)\in\crz$ is called a \emph{parent} of $R$,
if
\begin{itemize}
\vspace{-0.25cm}
\item[(i)] $M(R)$ can be decomposed into $2$ or $2^n$ mutually disjointed
sub-Calder\'on--Zygmund sets, and one of these sets is $R$;
\vspace{-0.25cm}
\item[(ii)] $3\rho(R)/2\le\rho(M(R))\le\max\{3, 2^n\}\rho(R)$.
\end{itemize}
\end{definition}

For any $R\in\crz$, a parent of $R$ always exists, but it may not be
unique. The following lemma gives three different kinds of
extensions for sets $R\equiv Q\times[t{-r},\, t+r)\in\crz$ when
$r\ge1$. Precisely, if $Q$ has small side length, then we find a
parent of $R$ by extending $R$ ``horizontally''; if $Q$ has large
side length, then we find a parent of $R$ by extending $R$
either ``vertically up" or ``vertically down".

\begin{lemma}\label{l3.1}
Suppose that $R\equiv Q\times[t{-r},\, t+r)\in\crz$, where $t\in\mathbb R$,
$r\equiv r_R\ge1$ and $Q\subset\rn$ is a dyadic cube with side
length $L$ satisfying $e^t\,e^{2r}\le L<e^t\,e^{8r}$. Then the following
hold:
\begin{itemize}
\vspace{-0.25cm}
\item[(i)] {If} $e^t\,e^{2r}\le L<e^t\,e^{8r}/2$, then $R_1\equiv
Q'\times[t-r,\, t+r)$ is a parent of $R$, where $Q'\subset\rn$
is the unique dyadic cube with side length $2L$ that contains $Q$.
Moreover, $\rho(R_1)=2^n\rho(R)$.
\vspace{-0.25cm}
\item[(ii)] If $e^t\,e^{8r}/2\le L<e^t\,e^{8r}$, then
$R_2\equiv Q\times[t{-r},\, t+{3r})$
is a parent of $R$. Moreover, the set $R'\equiv Q\times[t+r,
t+{3r})$ belongs to the family $\crz$, $R_2=R\cup R'$ and
$$\rho(R)=\rho(R')=\rho(R_2)/2.$$
The set $R_2$ is also a parent of $R'$.
\vspace{-0.25cm}
\item[(iii)] If $e^t\,e^{8r}/2\le L<e^t\,e^{8r}$, then
$R_3\equiv Q\times[t{-5r},\, t+{r})$
is a parent of $R$. Moreover, the set $R''\equiv Q\times[t{-5r},
t{-r})$ belongs to the family $\crz$, $R_3=R\cup R''$ and
$$\rho(R)=\rho(R'')/2=\rho(R_3)/3.$$
The set $R_3$ is also a parent of $R''$.
\end{itemize}
\end{lemma}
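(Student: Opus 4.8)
The plan is to verify, for each of the three cases, that the enlarged set lies in $\crz$ and that the measure ratios in Definition \ref{d3.1} hold; the work is essentially bookkeeping with the defining inequalities $e^t e^{2r}\le L<e^t e^{8r}$ and Remark \ref{r2.1}. For part (i), I would first note that since $Q$ is a dyadic cube of side length $L$, it is contained in a unique dyadic cube $Q'$ of side length $2L$, so $R_1\equiv Q'\times[t-r,t+r)$ is well defined; the exponent of the vertical interval is unchanged, so I only need $R_1\in\crz$, i.e. $e^t e^{2r}\le 2L<e^t e^{8r}$. The left inequality is immediate from $L\ge e^t e^{2r}$, and the right follows from the hypothesis $L<e^t e^{8r}/2$. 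By Remark \ref{r2.1}, $\rho(R_1)=2r(2L)^n=2^n\rho(R)$, and since $n\ge1$ we have $2^n\ge 3$, so $3\rho(R)/2\le 2^n\rho(R)\le\max\{3,2^n\}\rho(R)$, giving condition (ii) of Definition \ref{d3.1}; condition (i) holds because splitting $Q'$ into its $2^n$ dyadic children and crossing with $[t-r,t+r)$ exhibits $R_1$ as a union of $2^n$ Calder\'on--Zygmund sets one of which is $R$.

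For part (ii), the set $R_2\equiv Q\times[t-r,t+3r)$ has center height $t+r$ and radius $2r$. Writing $t_2\equiv t+r$ and $r_2\equiv 2r$, I must check $e^{t_2}e^{2r_2}\le L<e^{t_2}e^{8r_2}$, i.e. $e^{t+r}e^{4r}\le L<e^{t+r}e^{16r}$, equivalently $e^t e^{5r}\le L<e^t e^{17r}$; the left side follows from the hypothesis $L\ge e^t e^{8r}/2\ge e^t e^{8r-1}\ge e^t e^{5r}$ (using $r\ge1$ so $8r-1\ge 5r$... indeed $3r\ge1$), and the right side from $L<e^t e^{8r}<e^t e^{17r}$; the half in $e^t e^{8r}/2\ge e^t e^{7r}$ is even more comfortable since $e^{8r}/2\ge e^{8r-1}\ge e^{5r}$. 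Similarly $R'\equiv Q\times[t+r,t+3r)$ has center $t+2r$, radius $r$, and belongs to $\crz$ iff $e^{t+2r}e^{2r}\le L<e^{t+2r}e^{8r}$, i.e. $e^t e^{4r}\le L<e^t e^{10r}$, which again follows from $e^t e^{8r}/2\le L<e^t e^{8r}$ together with $r\ge1$. Then $R_2=R\cup R'$ is a disjoint union of two Calder\'on--Zygmund sets of equal measure $2rL^n$, so $\rho(R)=\rho(R')=\rho(R_2)/2$, and $\rho(R_2)=2\rho(R)$ lies in $[3\rho(R)/2,\max\{3,2^n\}\rho(R)]$; this makes $R_2$ a parent of both $R$ and $R'$.

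Part (iii) is analogous: $R_3\equiv Q\times[t-5r,t+r)$ has center $t-2r$ and radius $3r$, and $R''\equiv Q\times[t-5r,t-r)$ has center $t-3r$ and radius $2r$; membership in $\crz$ reduces to the inequalities $e^t e^{-r}\le L<e^t e^{22r}$ and $e^t e^{r}\le L<e^t e^{13r}$ respectively, both of which are comfortably implied by $e^t e^{8r}/2\le L<e^t e^{8r}$ and $r\ge1$. Here $R_3=R\cup R''$ with $\rho(R'')=2\rho(R)$ and $\rho(R_3)=3\rho(R)$, so $\rho(R_3)\in[3\rho(R)/2,3\rho(R)]\subset[3\rho(R)/2,\max\{3,2^n\}\rho(R)]$, and Definition \ref{d3.1}(i) is witnessed by the decompositions $R_3=R\cup R''$ (two sets) and, for the claim that $R_3$ is a parent of $R''$, by the same two-set splitting. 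I do not anticipate a genuine obstacle here; the only point that requires a little care is keeping track of which case's side-length hypothesis ($L<e^t e^{8r}/2$ in (i) versus $L\ge e^t e^{8r}/2$ in (ii) and (iii)) is being used and verifying the ``$/2$'' slack suffices in each vertical-extension estimate, together with the systematic use of $r\ge1$ to absorb additive constants in the exponents.
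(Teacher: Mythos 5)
Your proof is correct and follows essentially the same route as the paper's: verify membership in $\crz$ for each enlarged set by rewriting the defining inequalities in terms of the new centre height and radius, then compute the measures via Remark \ref{r2.1}. Two harmless arithmetic slips are worth fixing: in (i) you assert $2^n\ge 3$ for $n\ge 1$ (false for $n=1$), but only $2^n\ge 3/2$ is needed for Definition \ref{d3.1}(ii); and in (iii) the condition for $R_3\in\crz$ reduces to $e^te^{4r}\le L<e^te^{22r}$ (since $e^{t-2r}e^{6r}=e^te^{4r}$), not $e^te^{-r}\le L<e^te^{22r}$, though the correct lower bound is still comfortably implied by $L\ge e^te^{8r}/2$ and $r\ge1$.
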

\begin{proof}
We first prove (i). Since $2e^te^{2r}\le 2L<e^te^{8r}$, we
have that $R_1\in\crz$. Obviously $Q'\times[t{-r},
t+r)$ can be decomposed into $2^n$ sub-Calder\'on--Zygmund sets and
one of these sets is $R$. By Remark \ref{r2.1},  we have
$\rho(R_1)=2^n\rho(R)$. Thus, (i) holds.

To show (ii), notice that $r_{R_2}=2r$ and $t_{R_2}=t+r$. Since
$r\ge 1$ and  $e^t\,e^{8r}/2\le L<e^t\,e^{8r}$, we have that $e^{t+r}\,e^{4r}\le
L<e^{t+r}\, e^{16r}$, which implies that $R_2\in\crz$. If we set
$$R'\equiv Q\times[t+r,\, t+3r),$$
then $t_{R'}=t+{2r}$ and
$r_{R'}=r$. Since $r\ge 1$ and $e^t\,e^{8r}/2\le
L<e^t\,e^{8r}$, we obtain $e^{t+2r}\,e^{r}\le L<e^{t+2r}\,e^{8r}$, and
hence $R'\in\crz$. By Remark \ref{r2.1},
$\rho(R)=\rho(R')=\rho(R_2)/2.$ Thus, (ii) holds.

Finally, we show (iii). Observe that $t_{R_3}=t-2r$ and
$r_{R_3}=3r$. Since $r\ge 1$ and $e^t\,e^{8r}/2\le
L<e^t\,e^{8r}$, we have that $e^{t-2r}\,e^{6r}\le L<e^{t-2r}\,e^{24r}$
and hence $R_3\in\crz$. Set $R''\equiv Q\times[t{-5r},\,
t{-r})$. Notice that $t_{R''}=t{-3r}$ and $r_{R''}=2r$. Again by
{$r\ge 1$}  and $e^t\,e^{8r}/2\le L<e^t\,e^{8r}$, we obtain that
$e^{t-3r}\,e^{4r}\le L<e^{t-3r}\,e^{16r}$ and hence $R''\in\crz$. It
is easy to see that $R_3=R\cup R''$, $\rho(R'')=2\rho(R)$ and
$\rho(R_3)=3\rho(R)$. Therefore, we obtain (iii), which completes
the proof.
\end{proof}

We conclude this section by recalling the definition of the Hardy
space $H^1$ and its dual space $\BMO$ (see \cite{va}).

\begin{definition}
An {\emph{$H^1$-atom}} is a function $a$ in $L^1$ such that
\begin{itemize}
\vspace{-0.2cm}
\item [(i)] $a$ is supported in a set $R\in\crz$;
\vspace{-0.2cm}
\item [(ii)]$\|a\|_{  L^{\infty} } \le [\rho(R)]^{-1};$
\vspace{-0.2cm}
\item [(iii)] $\int_S a \,d\rho =0$\,.
\end{itemize}
\end{definition}

\begin{definition}
The {\it Hardy space} $H^{1}$ is the space of all functions $g$ in $ L^1$
which can be written as $g=\sum_j \lambda_j\, a_j$, where
{$\{a_j\}_j$} are {$H^1$-atoms } and {$\{\lambda _j\}_j$} are
complex numbers such that $\sum _j |\lambda _j|<\infty$. {Denote} by
$\|g\|_{H^{1}}$ the {\it infimum of $\sum_j|\lambda_j|$ over such
decompositions}.
\end{definition}

In the sequel, for any locally integrable function $f$ and any set
$R\in \crz$, we denote by $f_R$ the {\it average of $f$ on $R$}, namely,
$\frac{1}{ \rho(R)} \int_R f d\rho$.

\begin{definition}
For any locally integrable function $f$, its {\it sharp maximal function}
is defined by
$$f^{\sharp}(x)\equiv\sup_{R\in\crz(x)}\frac{1}{ \rho(R)} \int_R
|f-f_R|\,d\rho \qquad { \forall\ } x\in S\,.$$
The space $\mathcal{B}\mathcal{M}\mathcal{O}$ is the {\it space of all
locally integrable functions $f$ such that $f^{\sharp}\in L^
{\infty}$}. The {\it space $\BMO$} is the quotient of
$\mathcal{B}\mathcal{M}\mathcal{O}$ module constant functions. It is
a Banach space endowed with the norm $\|f\|_{\BMO}
\equiv\|f^{\sharp}\|_{L^{\infty}}$.
\end{definition}

The space $\BMO$ is identified with the dual of $H^1$; see
\cite[Theorem 3.4]{va}. More precisely, for any $f$ in
$\BMO$, the functional $\ell$ defined by $\ell(g)\equiv\int
fg\,d\rho$ for any finite linear combination $g$ of atoms extends to
a bounded functional on $H^{1}$ whose norm is no more than $C\,\|f\|_{\BMO}.$
On the other hand, for any bounded linear functional $\ell$ on
$H^{1}$, there exists a function $f^{\ell}$ in $\BMO$
such that $\|f^{\ell}\|_{\BMO}\le C\,\|\ell\|_{(H^{1})^*}$ and
$\ell(g)=\int f^{\ell}g\,d\rho$ for any finite linear combination $g$ of
atoms.

\section{A dyadic grid on $(S, d, \rho)$}\label{s3}

The main purpose of this section is to introduce a dyadic grid of
Calder\'on--Zygmund sets on $(S, d,\rho)$, which can be considered
as an analogue of Euclidean dyadic cubes (see \cite[p.\,149]{st93}
or \cite[p.\,384]{g}). The key tools to construct  such a grid are
{Lemmas \ref{l2.1} and \ref{l3.1}.}

\begin{theorem}\label{t3.1}
There exists a collection $\{\cp_j\}_{j\in\zz}$ of partitions of $S$
such that each $\cp_j$ consists of pairwise disjoint
Calder\'on--Zygmund sets, and
\begin{itemize}
\vspace{-0.25cm}
\item[(i)] for any $j\in\zz$,  $S=\cup_{R\in\cp_j} R$;
\vspace{-0.25cm}
\item[(ii)] if $\ell\le k$, $R\in\cp_\ell$ and $R^\pz\in\cp_k$, then
either $R\subset R^\pz$ or $R\cap R^\pz=\emptyset$;
\vspace{-0.25cm}
\item[(iii)] for any $j\in\zz$ and $R\in\cp_j$, there
exists a unique $R^\pz\in\cp_{j+1}$ such that $R\subset R^\pz$ and
$\rho(R^\pz)\le 2^n\,\rho(R)$;
\vspace{-0.25cm}
\item[(iv)] for any $j\in\zz$, every $R\in\cp_j$ can be decomposed into
mutually disjoint sets $\{R_i\}_{i=1}^k\subset\cp_{j-1}$, with $k=2$
or $k=2^n$, such that $R=\cup_{i=1}^k R_i$ and
$\frac{\rho(R)}{2^n}\le\rho(R_i)\le {\frac{2\rho(R)}{3}}$ for all
$i\in\{1,\cdots,k\}$;
\vspace{-0.25cm}
\item[(v)] for any $x\in S$ and for any $j\in\zz$,
let $R_j^x$ be the unique set in $\cp_j$ which contains $x$, then
$\lim_{j\to -\infty}\rho(R_j^x)=0$ and $\lim_{j\to
\infty}\rho(R_j^x)=\infty$.
\end{itemize}
\end{theorem}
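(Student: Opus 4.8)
The plan is to construct the partitions $\{\cp_j\}_{j\in\zz}$ by fixing a well-chosen ``seed'' Calder\'on--Zygmund set $R_0$ at generation $0$, and then iterating the splitting operation of Lemma \ref{l2.1}(iii) to build finer generations and the parent construction of Lemma \ref{l3.1} to build coarser ones. More concretely, I would start from a single set and simultaneously tile all of $S$ at generation $0$: since the Calder\'on--Zygmund sets of a fixed shape (fixed $t$-interval length and fixed dyadic scale in $\rn$, chosen compatibly with Definition \ref{d2.1}) clearly tile $S$ by translation, generation $0$ can be taken to be such a tiling. The real content is to define $\cp_{j}$ for $j\neq 0$ coherently so that the nesting properties (ii)--(iv) hold.

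First I would build the finer generations $\cp_{-1},\cp_{-2},\dots$ downward: given $\cp_j$, apply Lemma \ref{l2.1}(iii) to \emph{every} $R\in\cp_j$, obtaining for each $R$ a partition into $k_R\in\{2,2^n\}$ sub-Calder\'on--Zygmund sets of equal measure $\rho(R)/k_R$; let $\cp_{j-1}$ be the union of all these pieces over $R\in\cp_j$. Then $\cp_{j-1}$ is automatically a partition of $S$ refining $\cp_j$, each piece is in $\crz$, and $\rho(R_i)=\rho(R)/k_R\in[\rho(R)/2^n,\rho(R)/2]$, which is slightly stronger than the bound $\tfrac{2\rho(R)}{3}$ demanded in (iv). The subtlety here is that Lemma \ref{l2.1}(iii) as quoted does not by itself pin down which of the two splitting modes ($k=2$ versus $k=2^n$) occurs; but the discussion after Lemma \ref{l2.1} shows the mode is dictated by the geometry of $R$ (whether the dyadic side length $L$ is large or small relative to $e^{t}r$), so there is a canonical choice, and one just records it. Iterating downward gives $\cp_j$ for all $j\le 0$, and property (v)'s statement that $\rho(R_j^x)\to 0$ as $j\to-\infty$ will follow since each step multiplies measure by at most $1/2$.

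Next I would build the coarser generations $\cp_1,\cp_2,\dots$ upward using Lemma \ref{l3.1}, which is exactly the ``parent'' machinery. The key point is to choose the seed set $R_0$ and all of generation $0$ so that every set has $r_R\ge 1$ and, moreover, so that the parent-assignment is \emph{consistent}: one needs that the parents $M(R)$, as $R$ ranges over $\cp_j$, themselves form a partition of $S$ and each $M(R)$ is split into CZ-subsets one of which is $R$. Lemma \ref{l3.1} gives three mechanisms --- horizontal doubling of the cube ($\rho$ multiplied by $2^n$), or vertical extension up ($\rho$ multiplied by $2$) or down ($\rho$ multiplied by $3$) --- and in each case $R_0$'s parent is tiled by $2$ or $2^n$ CZ-subsets including $R_0$. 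I would choose, once and for all, \emph{which} of these three moves to apply at each generation (this can be made uniform across a whole generation because all sets in $\cp_j$ share the same ``shape'' up to translation), so that $\cp_{j+1}\equiv\{M(R):R\in\cp_j\}$ is well-defined (no set appears as the parent of two different $R$'s with conflicting decompositions) and tiles $S$. With this, (iii) is immediate since $\rho(M(R))\le\max\{3,2^n\}\rho(R)$ and indeed $\rho(M(R))\le 2^n\rho(R)$ when one uses the horizontal move (and $\le 2^n\rho(R)$ trivially holds when $n\ge 2$ even for the vertical moves; the case $n=1$ needs the horizontal/vertical-up moves, both of which give factor $2=2^n$); and (v)'s assertion $\rho(R_j^x)\to\infty$ follows since each upward step multiplies measure by at least $3/2$.

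The main obstacle, and the step deserving the most care, is the \emph{global coherence} of the two constructions: one must verify that refining (downward, via Lemma \ref{l2.1}(iii)) and coarsening (upward, via Lemma \ref{l3.1}) are mutually inverse in the sense needed for property (iv), i.e.\ that when $R\in\cp_{j+1}$ is obtained as a parent $M(R')$, its Lemma \ref{l2.1}(iii)-decomposition into $\cp_j$-pieces really does return $R'$ as one of the pieces --- so that the ``children of $R$'' and the ``sets in $\cp_j$ contained in $R$'' coincide. This forces the choice of splitting mode in Lemma \ref{l2.1}(iii) to be compatible with the choice of parent-extension mode in Lemma \ref{l3.1} at the adjacent generation, which is precisely why one cannot build $\cp_j$ independently for each $j$ but must propagate a single consistent rule (a single seed plus a fixed schedule of horizontal/vertical moves) through all of $\zz$. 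Once that bookkeeping is set up, properties (i)--(iv) are read off directly from Lemmas \ref{l2.1} and \ref{l3.1} together with the telescoping measure estimates, property (ii) follows by transitivity of the one-step nesting in (iii)--(iv), and property (v) follows from the geometric growth/decay of $\rho(R_j^x)$ by factors bounded away from $1$ on both sides.
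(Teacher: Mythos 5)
There is a genuine gap, and it sits at the very foundation of your construction. You take for granted that ``Calder\'on--Zygmund sets of a fixed shape \dots clearly tile $S$ by translation,'' so that generation $0$ is a uniform tiling and, later, that ``all sets in $\cp_j$ share the same shape up to translation.'' This is false. The admissibility condition in Definition \ref{d2.1} ties the Euclidean side length $L$ of $Q$ to the height: for $r\ge 1$ one needs $e^{t}e^{2r}\le L<e^{t}e^{8r}$. Horizontal translates of a Calder\'on--Zygmund set are again Calder\'on--Zygmund sets, but a vertical translate $Q\times[t'-r,t'+r)$ with the same $L$ and $r$ is admissible only if $|t'-t|<6r$; to tile the $t$-axis you would need $t'-t$ to run over all multiples of $2r$, which is impossible for a single $L$. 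So no single shape tiles $S$, no generation can consist of translates of one set, and in fact the measures of two sets in the same generation cannot even be made comparable (this is exactly Remark \ref{r3.1}(ii)). Consequently the mechanism you propose for the coarse generations --- apply one uniform move from Lemma \ref{l3.1} to every member of $\cp_j$ and declare $\cp_{j+1}$ to be the resulting family of parents --- cannot be carried out: the ``uniform schedule of moves'' has no meaning when the sets in $\cp_j$ have genuinely different shapes, and the family of parents of a partition need not be a partition (adjacent sets can have overlapping but unequal parents).

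The missing idea is the one that actually carries the paper's proof. One must treat the half-spaces $\rn\times[0,\infty)$ and $\rn\times(-\infty,0)$ separately, build from a seed a single nested chain $R_0\subset R_1\subset\cdots$ of Calder\'on--Zygmund sets (alternating horizontal and vertical extensions as forced by the admissibility condition) whose heights exhaust the half-space, and tile each resulting horizontal strip by horizontal translates at that strip's own natural scale ($\cn_j$ and $\wz{\cn_j}$). The partition $\cp_j$ then consists of the natural tiles of strip $j$ \emph{together with the $(\ell-j)$-fold subdivisions}, via Lemma \ref{l2.1}(iii), \emph{of the natural tiles of every higher strip $\ell\ge j+1$} (the families $\wz{\cn_\ell}^{\ell-j}$). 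It is this extra layer that makes each $\cp_j$ cover the whole half-space and makes the refinement and parent operations mutually inverse, i.e.\ it is the resolution of exactly the ``global coherence'' obstacle you correctly identify but then defer. Your downward construction (iterated application of Lemma \ref{l2.1}(iii) to all of $\cp_0$) is fine and matches the paper; your observation that the factor in (iii) requires avoiding the vertical-down move when $n=1$ is a fair point; but without the strip-by-strip bookkeeping above, properties (i)--(iv) for $j>0$ cannot be ``read off,'' and the proof is not complete.
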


\begin{proof}
We write $S\equiv\boz_1\cup\boz_2$, where $\boz_1\equiv\rn\times[0,
\fz)$ and  $\boz_2\equiv\rn\times(-\infty,0)$,
and construct a sequence $\{\cp_j^1\}_{j\in\nn}$
of partitions of $\boz_1$ as well as a sequence $\{\cp_j^2\}_{j\in\nn}$
of partitions of $\boz_2$, respectively.

Let us first construct the desired partitions
$\{\cp_j^1\}_{j\in\nn}$ of $\boz_1$ by the following four steps.

{{\bf Step 1.} } Choose a Calder\'on--Zygmund set $R_0\equiv
Q_0\times[t_0-r_0, t_0+{r_0})$, where $t_0=r_0\ge1$ and
$Q_0=[0, \ell_0)^n\in\cq$, with $e^{t_0}\,e^{2
r_0}\le\ell_0<e^{t_0}\,e^{8 r_0}$. To find a parent of $R_0$, we consider
the following two cases, separately.

{\it {Case 1:}} $e^{t_0}\,e^{8 r_0}/2\le\ell_0<e^{t_0}\,e^{8 r_0}$. In this
case,  by Lemma \ref{l3.1}(ii),
$$R_1\equiv Q_0\times[t_0{-r_0}, t_0+{3r_0})$$
is a parent of $R_0$ and $\rho(R_1)=2\rho(R_0)$.

{\it {Case 2:}} $e^{t_0}\,e^{2 r_0}\le\ell_0< e^{t_0}\,e^{8 r_0}/2$.
By Lemma \ref{l3.1}(i), $R_1\equiv Q_1\times[t_0{-r_0},
t_0+{r_0})$ is a parent of $R_0$, where $Q_1\subset\rn$ is the
unique dyadic cube with side length $2\ell_0$ that contains $Q_0$,
namely,  $Q_1=[0, 2\ell_0)^n$.

We then proceed as above to obtain a parent of $R_1$, which is
denoted by $R_2$. By repeating this process, we obtain a sequence of
Calder\'on-Zygmund sets, $\{R_j\}_{j\in\nn}$, such that each
$R_{j+1}$ is a parent of $R_j$.

Without loss of generality, for any $j\in\nn$, we may set $R_j\equiv
Q_j\times [t_j-r_j, t_j+r_j)$, where $r_{j+1}\ge r_j\ge1$,
$t_j=r_j$ and $Q_j=[0, \ell_j)^n\in\cq$, with
$e^{t_j}\,e^{2 r_j}\le\ell_j<e^{t_j}\,e^{8 r_j}$. Observe that $R_{j+1}$ is
obtained by extending $R_j$ either ``vertically up" (see {\it {Case
1}}) or ``horizontally" (see {\it {Case 2}}). Notice that the
definition of Calder\'on--Zygmund sets implies that we cannot always
{extend} $R_j$ ``horizontally'' to obtain its parent $R_{j+1}$; in
other words, for some $j$, to obtain $R_{j+1}$, we have to {extend}
$R_j$ ``vertically {up}''. Thus, $\lim_{j\to\fz}(t_j+r_j)=\fz$.
This, combined with the fact that $t_j=r_j$, implies that
\begin{equation}\label{3.1}
\boz_1 = \bigcup_{j\in\nn}  \big(\rn\times[t_j-r_j, t_j+{r_j})\big).
\end{equation}

{\bf Step 2. } For any $j\in\nn$ and $R_j$ as constructed in {\bf
Step 1}, we set
\begin{equation}\label{3.2}
\cn_j\equiv\{Q\times[t_j{-r_j}, t_j+{r_j}):\, Q\in\cq,\,
\ell(Q)=\ell(Q_j)\}.
\end{equation}
Then $\cn_j\subset\crz$ and we put all sets of $\cn_j$ into
$\cp_j^1$. If $R_{j+1}$ is obtained by extending $R_j$ ``vertically
{up}" as in {\it Case 1} of {\bf Step 1}, then we set
\begin{equation}\label{3.3}
\wz{\cn_j}\equiv\{Q\times[t_j+{r_j}, t_j+{3r_j}):\, Q\in\cq,\,
\ell(Q)=\ell(Q_j)\}.
\end{equation}
By
Lemma \ref{l3.1}(ii), $\wz{\cn_j}\subset\crz$.
If $R_{j+1}$ is obtained by extending $R_j$ ``horizontally" as in
{\it Case 2} of {\bf Step 1}, then we set $\wz{\cn_j}=\emptyset$.
We also put all sets of $\wz{\cn_j}$ into $\cp_j^1$.

We claim that for any fixed $j\in\nn$,
\begin{equation}\label{Omega1unione}
\Omega_1=\bigcup_{R\in\cn_j\cup (\cup_{\ell=j}^{\infty}\wz{\cn_{\ell}})} R\,.
\end{equation}
Indeed,
\begin{equation}\label{primaparte}
\rr^n\times [0,t_j+{3r_j})=\bigcup_{R\in \cn_j\cup \wz{\cn_j}}R.
\end{equation}
Rewrite the sequence $\{\wz\cn_k:\, k>j,\, \wz{\cn}_{k}\neq\emptyset\}$ as
$\{\wz{\cn}_{\ell_k}\}_{k=1}^\infty$, where $$j+1\le
\ell_1<\ell_2<\dots<\ell_k<\dots \,.$$
We have that
\begin{equation} \label{oss1}
t_j+{3r_j}=t_{\ell_1}+{r_ {\ell_1} } \quad {\rm{and}} \quad
t_{\ell_{k-1}}+{3r_ {\ell_{k-1}} }=t_{\ell_k}+{r_ {\ell_k} } \quad
 \forall\ k\ge 1.
\end{equation}
Since
$$
\rr^n\times [t_{\ell_k}+{r_ {\ell_k} }, t_{\ell_k}+{3r_ {\ell_k}
})= {\bigcup_ {R\in\wz{\cn} _{\ell_k} } R }
$$
and $\lim_{k\to \infty}  ( t_{\ell_k}+{3r_ {\ell_k}})
={\infty}$, by (\ref{oss1}), we obtain that
\begin{equation}\label{secondaparte}
{ \rr^n\times [ t_j+{3r_j},\infty ) =\bigcup_{k\ge
1}\bigcup_{R\in\wz{\cn}_{\ell_k}  }  R = \bigcup_{
\ell\ge j+1} \bigcup_{R\in \wz{\cn}_{\ell} }   R.}
\end{equation}
The claim \eqref{Omega1unione} follows by \eqref{primaparte} and
\eqref{secondaparte}.

{\bf Step 3. } Now fix $j\in\nn$ and take $\ell\ge j+1$ such that
$\wz{\cn_{\ell}}\neq\emptyset$.
For any
$R\in\wz{\cn_{\ell}}$, by Lemma \ref{l2.1}(iii), there exist mutually
disjoint sets $\{R^{i}\}_{i=1}^k\subset\crz$ with $k=2$ or $k=2^n$
such that $R=\cup_{i=1}^k R_i$, and
$\rho(R)/2^n\le\rho(R_{i})\le\rho(R)/2$ for all $i\in\{1,\cdots,k\}$.
Denote by $\wz{\cn_{\ell}}^1$ the collection of all such small
Calder\'on-Zygmund sets $R_i$ obtained by running $R$ over all
elements in $\wz{\cn_{\ell}}$. Observe that sets in $\wz{\cn_{\ell}}^1$ are
mutually disjoint. Next, we apply Lemma \ref{l2.1}(iii) to every
$R\in\wz{\cn_{\ell}}^1$ and argue as above; we then obtain a  collection
of smaller Calder\'on--Zygmund sets, which is denoted by
$\wz{\cn_{\ell}}^2$. By repeating the above procedure $i$ times, we
obtain a collection of Calder\'on--Zygmund sets which we denote by $\wz{\cn_\ell}^{i}$.
In particular, we put the collection $\wz{\cn_\ell}^{\ell-j}$ obtained after $\ell-j$
steps into $\cp_j^1$.

Thus, for any $j\in\nn$, we define
\begin{equation}\label{3.4}
\cp_j^1=\cn_j\bigcup\wz{\cn_j} \bigcup   \lf(\bigcup_{\ell\ge
j+1}\wz{\cn_\ell}^{\ell-j}\r)   .
\end{equation}
By construction, the sets in $\cp_j^1$ are mutually disjoint. Moreover,
since for all $j\ge 0$ and $\ell\ge j+1$,
$$
\bigcup_{R\in\wz{\cn}_{\ell}^{\ell-j}}R=\bigcup_{R\in\wz{\cn}_{\ell}}R\,,
$$
from the formula \eqref{Omega1unione},
we deduce that $\boz_1=\cup_{R\in\cp_j^1}R$. This
shows that $\cp_j^1$ satisfies the property (i).

{\bf Step 4.} For any $0\le \ell\le k$, $R\in\cp_\ell^1$ and
$R^\pz\in\cp_k^1$, by \eqref{3.4}, \eqref{3.2}, \eqref{3.3} and the
construction above, it is easy to verify that either
$R\subset R^\pz$ or $R\cap R^\pz=\emptyset$, namely, the property
(ii) is satisfied.

Let $R$ be in $\cp_j^1$ for some $j\in\nn$. If $R$ is in
$\cn_j\cup\wz{\cn}_j$ and if $R_{j+1}$ is obtained by extending
$R_j$ ``horizontally", then there exists one parent of $R$ in
$\cp_{j+1}$ whose measure is {$2^n\rho(R)$}. If $R$ is in
$\cn_j\cup\wz{\cn}_j$ and if $R_{j+1}$ is obtained by extending
$R_j$ ``vertically {up}", then there exists one parent of $R$ in
$\cp_{j+1}$ whose measure is {$2\rho(R)$.} If $R$ is in
$\wz{\cn}_{\ell}^{\ell-j}$ for some $\ell\ge j+1$, then it has a
parent in $\wz{\cn}_{\ell}^{\ell-j-1} \subset \cp_{j+1}$ whose
measure is either $2\rho(R)$ or $2^n\rho(R)$. Thus, the property (iii) is
satisfied.

So far, we have proven that there exists a sequence $\{\cp_j^1\}_{j\in\nn}$
of partitions of $\boz_1$ whose elements satisfy the
properties (i)--(iii).

To obtain the desired partitions $\{\cp_j^2\}_{j\in\nn}$ on
$\boz_2$, we {apply (i) and (iii) of Lemma \ref{l3.1} and} proceed
as for $\boz_1$: the details are left to the reader.

We define $\cp_j\equiv\cp_j^1\cup\cp_j^2$ for all $j\ge 0$.
We now construct the partitions $\cp_j$ for $j<0$. By applying
Lemma \ref{l2.1}(iii) to each $R\in\cp_0$, we find mutually disjoint sets
$\{R_{i}\}_{i=1}^k$, with $k=2$ or $k=2^n$, such that $R_i\in\crz$,
$R=\cup_{i=1}^k R_i$, and $\rho(R)/2^n\le\rho(R_{i})\le\rho(R)/2$
for all $i\in\{1,\cdots,k\}$. Then we define $\cp_{-1}$ to be the
collection of all such small Calder\'on-Zygmund sets $R_i$ obtained
by running $R$ over all elements in $\cp_0$. Clearly $\cp_{-1}$ is
still a partition of $S$. Again, applying {Lemma \ref{l2.1}(iii)} to
each element of $\cp_{-1}$ and using a similar splitting argument
to this, we obtain a collection of smaller Calder\'on--Zygmund sets, which is
defined to be $\cp_{-2}$. By repeating this process, we obtain a
collection $\{\cp_j\}_{j<0}$, where each $\cp_j$ is a partition of
$S$. By {the} construction of $\{\cp_j\}_{j<0}$ and by Lemma \ref{l2.1}(iii),
it is easy to check that the sets in
$\{\cp_j\}_{j<0}$ satisfy {the} properties {(i)--(iii).}

It remains to prove the properties (iv) and (v). For a set $R\in \cp_j$,
with $j\le 0$, the property (iv) is easily deduced from Lemma \ref{l2.1}(iii).
Take now a set $R$ in $\cp_j^1$ for some $j>0$. If $R$ is in $\cn_j$,
then it has either $2^n$ disjoint subsets in $\cn_{j-1}$ or $2$
disjoint subsets in $\cn_{j-1}\cup  \wz{\cn}_{j-1}$. If $R$ is in
$\wz{\cn}_j$, then it has either $2^n$ or $2$ disjoint subsets in
$\wz{\cn}_{j}^1\subset \cp_{j-1}^1$. Finally, if $R$ is in
$\wz{\cn}_{\ell}^{\ell-j}$ for some $\ell\ge j+1$, then it has
either $2^n$ or $2$ subsets in $\wz{\cn}_{\ell}^{\ell-j+1}\subset
\cp_{j-1}^1$. In all the previous cases, $R$ satisfies the property (iv).
The case when $R$ is in $\cp_j^2$ for some $j>0$ is similar and omitted.

As far as the property (v) is concerned, given a point $x$ in $S$, for any
$j\in\zz$, let $R_j^x$ be the set in $\cp_j$ which contains $x$. By
the construction and the property (iv), for any $j\in\zz$,
there exists a set
$R_{j+1}^x\in \cp_{j+1}$ which is a parent of $R_j^x$, so that
$$\rho( R_{j+1}^x)\ge \frac{3}{2}\rho(R_j^x)\ge
\lf(\frac{3}{2}\r)^j\rho(R_0^x);$$
this shows that
$\lim_{j\to \infty}  \rho(R_j^x)=\infty$.
For any $j<0$,
we have that
$$\rho(R_j^x)\le {\frac23}\rho(R_{j+1}^x) \le
{\lf(\frac23\r)^j}\rho(R_0^x);$$
this shows that $\lim_{j\to -\infty}
\rho(R_j^x)=0$ and concludes the proof of the theorem.
\end{proof}

\begin{remark}\label{r3.1}
\begin{itemize}
\item[(i)] It should be pointed out that  a sequence $\{\cp_j\}_{j\in\zz}$
satisfying Properties (i)--(v) of Theorem \ref{t3.1} is not unique.
\vspace{-0.25cm}
\item[(ii)] For any given $j\in\zz$, the measures of any two elements
in $\cp_j$ may not be comparable. This is an essential difference
between the collection of Euclidean dyadic cubes and of dyadic sets
in spaces of homogeneous type \cite{c} and the dyadic sets which we
introduced above.
\end{itemize}
\end{remark}

We now choose one collection $\cp\equiv \{\cp_j \}_j$ of dyadic sets
in $S$ constructed as in Theorem \ref{t3.1}. In the sequel, $\cp$
always denotes this collection.

\section{Dyadic maximal functions}

By using the collection $\cp$ introduced above, we define the
corresponding Hardy--Littlewood dyadic maximal function and dyadic
sharp maximal function as follows.

\begin{definition}\label{dyadicmaxfct}
For any locally integrable function $f$ on $(S, d, \rho)$,
the {\it Hardy--Littlewood dyadic maximal function} $\cm_{\cp} f$
is defined by
\begin{equation}\label{3.6}
\cm_{\cp} f(x)\equiv\sup_{R\in\crz(x),\,
R\in\cp}\dfrac1{\rho(R)}\dint_R|f|\,d\rho\qquad {\forall\ } x\in
S\,,
\end{equation}
and the {\it dyadic sharp maximal function} $f^{\sharp}_{\cp}$ by
\begin{equation}\label{fsharpd}
f^{\sharp}_{\cp}(x)\equiv\sup_{R\in\crz(x),\,
R\in\cp}\dfrac1{\rho(R)}\dint_R|f -f_R|\,d\rho\qquad {\forall\ }
x\in S\,.
\end{equation}
Recall that $f_R\equiv\frac{1}{ \rho(R)} \int_R f\, d\rho$.
\end{definition}

It is easy to see that for all locally integrable functions $f$ and
almost every $x\in S$, $ f(x)\le\cm_{\cp} f(x)\le\cm f(x)$ {and
$f_\cp^\#(x)\le f^\#(x)$.} This combined with Proposition \ref{p2.1}
implies the following conclusion.

\begin{corollary}\label{c3.1}
The operator $\cm_{\cp}$ is bounded from $L^1$ to $L^{1,\,\fz}$, and
also bounded on $L^p$ for all $p\in(1, \fz]$.
\end{corollary}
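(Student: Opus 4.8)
The plan is to derive Corollary \ref{c3.1} directly from Proposition \ref{p2.1} via the pointwise domination $\cm_\cp f\le \cm f$, which holds because the supremum defining $\cm_\cp f(x)$ in \eqref{3.6} is taken over the subfamily $\{R\in\crz(x):R\in\cp\}$ of the full family $\crz(x)$ used in \eqref{2.1}. Thus for every locally integrable $f$ and every $x\in S$ we have $0\le\cm_\cp f(x)\le\cm f(x)$, so $\cm_\cp f$ is controlled pointwise by $\cm f$.

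From this pointwise inequality the boundedness statements transfer immediately. For $p\in(1,\fz]$, if $f\in L^p$ then $\|\cm_\cp f\|_{L^p}\le\|\cm f\|_{L^p}\ls\|f\|_{L^p}$ by the $L^p$ boundedness of $\cm$ from Proposition \ref{p2.1}, so $\cm_\cp$ is bounded on $L^p$. For the weak type $(1,1)$ estimate, for any $\az>0$ the pointwise bound gives the set inclusion
\begin{equation*}
\{x\in S:\cm_\cp f(x)>\az\}\subset\{x\in S:\cm f(x)>\az\},
\end{equation*}
hence $\rho(\{\cm_\cp f>\az\})\le\rho(\{\cm f>\az\})\le\frac{C}{\az}\|f\|_{L^1}$ again by Proposition \ref{p2.1}; taking the supremum over $\az>0$ of $\az\,\rho(\{\cm_\cp f>\az\})$ shows $\cm_\cp$ is bounded from $L^1$ to $L^{1,\,\fz}$.

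There is essentially no obstacle here: the corollary is a routine consequence of monotonicity of the supremum and the already-established mapping properties of $\cm$. The only point worth a word of care is that $\cm_\cp$ is well defined pointwise, i.e.\ that for each $x\in S$ the family $\{R\in\crz(x):R\in\cp\}$ is nonempty; this follows from Theorem \ref{t3.1}(i), which guarantees that for every $j\in\zz$ there is a (unique) $R_j^x\in\cp_j$ containing $x$, and each such $R_j^x$ lies in $\crz$ by construction. One could also remark that the analogous pointwise estimate $f_\cp^\#(x)\le f^\#(x)$ holds for the dyadic sharp function, for the same reason, though this is not needed for the statement of Corollary \ref{c3.1}.
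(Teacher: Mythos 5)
Your proposal is correct and is exactly the paper's argument: the paper deduces the corollary from the pointwise domination $\cm_{\cp}f\le\cm f$ (since the dyadic supremum runs over a subfamily of $\crz(x)$) combined with Proposition \ref{p2.1}. Your additional remarks on well-definedness and on $f^{\sharp}_{\cp}\le f^{\sharp}$ match the surrounding text as well.
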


\begin{remark}
\begin{itemize}
\item[(i)] It is obvious that $\cm_{\cp} f(x)\le  \cm f(x)$
for any locally integrable function $f$ at any point $x\in S$.
However, there exist functions $f$ such that $\cm f$ and $\cm_{\cp}
f$ are not pointwise equivalent. To see this, we take a set
$R\equiv \,Q\times  [0, {2r})$ in $\cp$ such that
$Q={[0, 2^{\ell_0})}^n$ for some $\ell_0\in\zz$. Then, for all
points $(y_1,\dots,y_n, s)\in S$ such that $y_j<0$ for all $j\in
\{1,\dots,n\}$ and $s<0$, we have $\cm_{\cp}(\chi_R)(y, s)=0$
and $\cm(\chi_R)(y, s)>0.$ So there does not exist a positive
constant $C$ such that $\cm(\chi_R)\le C \cm_{\cp}(\chi_R)$.
\vspace{-0.25cm}
\item[(ii)] It is obvious that $f^{\sharp}_{\cp}(x)\le
f^{\sharp}(x)$ for any locally integrable function $f$
at any point $x\in S$. The same counterexample as in (i) shows that
the sharp maximal function and the dyadic sharp maximal function may
be not pointwise equivalent. Indeed, if we take the set
$R\equiv \,Q\times  [0, 2r)$ as above, then for all points
$(y_1,\dots,y_n, s)\in S$ such that $y_j<0$ for all $j\in
\{1,\dots,n\}$ and $s<0$, we have
$(\chi_R)^{\sharp}_{\cp} (y, s)=0$ and $(\chi_R)^{\sharp} (y, s)>0.$
So there does not exist a positive constant $C$ such that
$(\chi_R)^{\sharp} \le C {(\chi_R)^{\sharp}_\cp}$.
\end{itemize}
\end{remark}

We now state a covering lemma for the level sets of $\cm_{\cp}$,
which is proven in a standard way as follows;
see also \cite[Lemma 1, p.150]{st93}.

\begin{lemma}\label{covering}
Let $f$ be a locally integrable function and $\alpha$ a positive
constant such that
$\Omega_{\alpha}\,\equiv \{x\in S:\, \cm_{\cp}f(x)>\alpha\}$
has finite measure. Then $\Omega_t$ is a disjoint
union of dyadic sets, $\{R_j\}_j$, with
$\alpha<\frac{1}{\rho(R_j)}\int_{R_j} |f|d\rho\le 2^n\alpha$
for all $j$.
\end{lemma}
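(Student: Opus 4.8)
\textbf{Proof proposal for Lemma \ref{covering}.}

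The plan is to mimic the classical Calder\'on--Zygmund stopping-time argument, using the nesting property (ii) of Theorem \ref{t3.1} in place of the nesting of Euclidean dyadic cubes and property (iii) to control the measure jump from a set to its parent. First I would observe that, since $f\in L^1_{\mathrm{loc}}$ and $\Omega_\alpha$ has finite $\rho$-measure, for every $x\in\Omega_\alpha$ there is at least one $R\in\cp$ with $x\in R$ and $\frac{1}{\rho(R)}\int_R|f|\,d\rho>\alpha$. Among all such dyadic sets containing $x$ I would pick a \emph{maximal} one, call it $R^x$; this is legitimate because property (v) of Theorem \ref{t3.1} forces $\rho(R^x_j)\to\infty$ as $j\to\infty$, so the averages $\frac{1}{\rho(R^x_j)}\int_{R^x_j}|f|\,d\rho$ tend to $0$ (as $\|f\|_{L^1(R^x_j)}\le\|f\|_{L^1}<\infty$, noting that $\Omega_\alpha$ finite forces $f\in L^1$ by Corollary \ref{c3.1}, or one argues directly that the averages over the increasing chain cannot stay above $\alpha$), and hence there is a largest scale at which the average still exceeds $\alpha$.

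Next I would let $\{R_j\}_j$ be the collection of all sets arising as $R^x$ for some $x\in\Omega_\alpha$. Two such maximal sets are either disjoint or one contains the other by property (ii); but if $R_i\subsetneq R_j$ then $R_j$ is a strictly larger dyadic set containing a point of $R_i$ over which the average of $|f|$ exceeds $\alpha$, contradicting the maximality used to select $R_i$. Hence the $R_j$ are pairwise disjoint. Every $x\in\Omega_\alpha$ lies in its own $R^x$, so $\Omega_\alpha\subset\bigcup_j R_j$; conversely each $R_j$ has average of $|f|$ exceeding $\alpha$, so $R_j\subset\Omega_\alpha$ (every point of $R_j$ sees $R_j$ as a competitor in the supremum \eqref{3.6}). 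Thus $\Omega_\alpha=\bigsqcup_j R_j$.

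For the upper bound, fix $R_j$ and let $R_j^\pz\in\cp$ be the unique dyadic parent of $R_j$ furnished by property (iii) of Theorem \ref{t3.1}, so $\rho(R_j^\pz)\le 2^n\rho(R_j)$. By maximality of $R_j$, the average of $|f|$ over $R_j^\pz$ is at most $\alpha$, whence
\begin{equation*}
\frac{1}{\rho(R_j)}\int_{R_j}|f|\,d\rho\le\frac{\rho(R_j^\pz)}{\rho(R_j)}\cdot\frac{1}{\rho(R_j^\pz)}\int_{R_j^\pz}|f|\,d\rho\le 2^n\alpha.
\end{equation*}
Combined with $\frac{1}{\rho(R_j)}\int_{R_j}|f|\,d\rho>\alpha$, this gives the claimed two-sided estimate. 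The one genuine subtlety — the step I would treat most carefully — is the existence of a maximal dyadic set containing a given $x$: in the Euclidean case this is automatic because dyadic cubes through a point form a single increasing chain with measures $\to\infty$, and here the same is true thanks to properties (ii) and (v) of Theorem \ref{t3.1} together with the integrability of $f$ (so that large-scale averages decay below $\alpha$); everything else is a routine transcription of the classical argument, and the final statement of the lemma as written replaces the generic constant by the sharp value $2^n$, which is exactly what property (iii) delivers.
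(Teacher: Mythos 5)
Your proof is correct and follows essentially the same stopping-time argument as the paper: select a maximal dyadic set through each point of $\Omega_\alpha$, deduce disjointness from the nesting property (ii) of Theorem \ref{t3.1}, and obtain the upper bound $2^n\alpha$ by comparing with the parent furnished by property (iii). The only slip is the parenthetical claim that finiteness of $\rho(\Omega_\alpha)$ forces $f\in L^1$ via Corollary \ref{c3.1} (that corollary gives the reverse implication); the clean justification for the existence of a maximal set, which you also allude to, is that any dyadic set with average of $|f|$ exceeding $\alpha$ is contained in $\Omega_\alpha$ and hence has measure at most $\rho(\Omega_\alpha)<\infty$, while property (v) forces the measures along the increasing dyadic chain through $x$ to tend to infinity.
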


\begin{proof}
Since the measure of $\Omega_{\alpha}$ is finite,
for each $x\in\Omega_{\alpha}$
there exists a maximal dyadic set $R_x\in\cp$ which contains $x$ such
that $ \alpha<\frac{1}{\rho(R_x)}\int_{R_x} |f|d\rho$. Any two of these
maximal dyadic sets are disjoint. Indeed, {by Theorem \ref{t3.1},}
given two points $x,y\in \Omega_{\alpha}$, either $R_x\cap R_y=\emptyset$
or one is contained in the other; by maximality, this implies
that $R_x=R_y$. We denote by $\{R_j\}_j$ this collection of
dyadic maximal sets. Then it is clear that $\Omega_{\alpha}=\cup_jR_j$.
Moreover, for any $j$, since $R_j$ is maximal, there exists a dyadic
set $\tilde{R}_j\in\cp$ which {is a parent of $R_j$ and}
$\frac{1}{\rho(\tilde{R}_j)}\int_{\tilde{R}_j} |f|\, d\rho\le {\alpha}$.
{Thus,}
$$\frac{1}{\rho(R_j)}\int_{R_j} |f|\,d\rho \le
2^n\frac{1}{\rho(\tilde{R}_j)}\int_{\tilde{R}_j} |f|\,d\rho\le
2^n{\alpha}.$$
This finishes the proof.
\end{proof}

As a consequence of the previous covering lemma, following closely
the proof of the inequality \cite[(22), p.153]{st93}, we obtain the
following relative distributional inequality. We omit the details.

\begin{proposition}
There exists a positive constant $K$ such that for any locally
integrable function $f$, and for any positive $c$ and $b$ with
$b<1$,
\begin{equation}\label{distr}
\rho\big( \{  x\in S:\,\cm_{\cp}f(x)>\alpha,\,
f^{\sharp}_{\cp}(x)\le c\alpha\}\big) \le K \frac{c}{   1-b} \,
\rho\big( \{  x\in S:\,\cm_{\cp}f(x)>b\alpha\}\big)
\end{equation}
for all $\alpha>0$. The constant $K$ only depends on $n$ and on the
norm $\|\cm_{\cp}\|_{L^1\to L^{1,\infty} }$.
\end{proposition}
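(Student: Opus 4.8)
The plan is to follow the classical Calder\'on--Zygmund--type argument exactly as in Stein's book \cite[pp.\,151--153]{st93}, with the Euclidean dyadic cubes replaced by the dyadic grid $\cp$ from Theorem \ref{t3.1}; the covering Lemma \ref{covering} is the only structural input needed, and its proof has already been carried out above. First I would fix $\alpha>0$ and a locally integrable $f$, and reduce to the case $\rho(\Omega_{b\alpha})<\infty$, where $\Omega_{b\alpha}\equiv\{x\in S:\,\cm_{\cp}f(x)>b\alpha\}$; otherwise the right-hand side of \eqref{distr} is infinite and there is nothing to prove. Since $b<1$, we have $\Omega_\alpha\subset\Omega_{b\alpha}$, and $\Omega_{b\alpha}$ has finite measure, so in particular $\Omega_\alpha$ has finite measure as well and Lemma \ref{covering} applies to both.

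Next I would apply Lemma \ref{covering} at the level $b\alpha$: this writes $\Omega_{b\alpha}=\bigcup_j R_j$ as a disjoint union of dyadic sets $R_j\in\cp$ with $b\alpha<\frac1{\rho(R_j)}\int_{R_j}|f|\,d\rho\le 2^n b\alpha$. The key observation is that the set $\{x\in S:\,\cm_{\cp}f(x)>\alpha,\ f^{\sharp}_{\cp}(x)\le c\alpha\}$ is contained in $\Omega_{b\alpha}$, hence in $\bigcup_j R_j$, so it suffices to estimate $\rho\big(R_j\cap\{\cm_{\cp}f>\alpha,\ f^{\sharp}_{\cp}\le c\alpha\}\big)$ for each $j$ and sum over $j$. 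To do this, fix $j$ such that this set is nonempty, pick $x_0\in R_j$ with $f^{\sharp}_{\cp}(x_0)\le c\alpha$, and set $f_{R_j}\equiv\frac1{\rho(R_j)}\int_{R_j}f\,d\rho$. For a point $x\in R_j$, when computing $\cm_{\cp}f(x)$ one splits the supremum over dyadic sets $R\in\cp$ containing $x$ into those with $R\subset R_j$ and those with $R\supsetneq R_j$; by the nesting property Theorem \ref{t3.1}(ii) these are the only two possibilities. For $R\supseteq R_j$, the maximality in the construction of $R_j$ forces $\frac1{\rho(R)}\int_R|f|\,d\rho\le b\alpha<\alpha$, so such $R$ never realize the condition $\cm_{\cp}f(x)>\alpha$. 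Hence on $R_j$ one has $\cm_{\cp}f(x)>\alpha$ only if the \emph{local} dyadic maximal function $\cm_{\cp}^{R_j}\big((f-f_{R_j})\chi_{R_j}\big)(x)$ exceeds $\alpha-|f_{R_j}|$, and since $|f_{R_j}|\le\frac1{\rho(R_j)}\int_{R_j}|f|\,d\rho\le 2^n b\alpha$, choosing $b$ appropriately (or more precisely carrying $b$ through) gives $\alpha-|f_{R_j}|\ge(1-2^n b)\alpha$ — wait, this requires $2^n b<1$; the cleaner route, matching Stein, is to absorb the constant: one just needs $\alpha-|f_{R_j}|\gtrsim(1-b)\alpha$ up to the factor from $2^n$, which is harmless since the final constant $K$ is allowed to depend on $n$.

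Then I would invoke the weak-type $(1,1)$ bound for the dyadic maximal operator restricted to $R_j$, which follows from Corollary \ref{c3.1} (the operator $\cm_{\cp}$ is bounded from $L^1$ to $L^{1,\infty}$, with norm $\|\cm_{\cp}\|_{L^1\to L^{1,\infty}}$, and restricting the underlying sets to those contained in $R_j$ only decreases the maximal function), to get
\begin{equation*}
\rho\big(R_j\cap\{\cm_{\cp}f>\alpha,\ f^{\sharp}_{\cp}\le c\alpha\}\big)
\le \frac{\|\cm_{\cp}\|_{L^1\to L^{1,\infty}}}{(1-b)\alpha/2^n}\int_{R_j}|f-f_{R_j}|\,d\rho.
\end{equation*}
Now $\frac1{\rho(R_j)}\int_{R_j}|f-f_{R_j}|\,d\rho\le f^{\sharp}_{\cp}(x_0)\le c\alpha$ by the very definition of $f^{\sharp}_{\cp}$ and the fact that $R_j\in\crz(x_0)\cap\cp$; hence $\int_{R_j}|f-f_{R_j}|\,d\rho\le c\alpha\,\rho(R_j)$. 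Substituting this in gives the bound $C\,\frac{c}{1-b}\,\rho(R_j)$ for each $j$, with $C$ depending only on $n$ and $\|\cm_{\cp}\|_{L^1\to L^{1,\infty}}$. Summing over the disjoint family $\{R_j\}_j$ and using $\bigcup_j R_j=\Omega_{b\alpha}$ yields
\begin{equation*}
\rho\big(\{\cm_{\cp}f>\alpha,\ f^{\sharp}_{\cp}\le c\alpha\}\big)\le C\,\frac{c}{1-b}\sum_j\rho(R_j)=C\,\frac{c}{1-b}\,\rho(\Omega_{b\alpha}),
\end{equation*}
which is exactly \eqref{distr} with $K=C$. The main obstacle — really the only point requiring care — is the dichotomy step: verifying that for $x\in R_j$ no dyadic set $R\supseteq R_j$ can witness $\cm_{\cp}f(x)>\alpha$, so that the computation genuinely localizes to $R_j$. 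This is where the nesting property Theorem \ref{t3.1}(ii) and the maximality built into Lemma \ref{covering} do the essential work; everything else is the standard bookkeeping that lets the constant swallow the factor $2^n$ coming from property (iii) of the grid. This is why the authors are comfortable writing ``We omit the details.''
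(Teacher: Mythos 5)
Your overall architecture is the right one and matches the classical argument the paper points to, but there is a genuine gap at exactly the step you flag with ``wait'': subtracting $f_{R_j}$ and hoping to absorb the factor $2^n$ into the final constant does not work. From Lemma \ref{covering} you only know $|f_{R_j}|\le \frac{1}{\rho(R_j)}\int_{R_j}|f|\,d\rho\le 2^n b\alpha$, so the threshold you obtain for the localized maximal function is $\alpha-|f_{R_j}|\ge (1-2^n b)\alpha$, and $1-2^n b\le 0$ already for $b=1/2$ and $n\ge 1$ --- precisely the range of $b$ needed when this proposition is fed into Corollary \ref{FeffStein}. The factor $2^n$ sits inside $1-2^nb$, multiplying $b$ rather than the final estimate, so it cannot be ``absorbed into $K$'': the inequality $\alpha-|f_{R_j}|\gtrsim(1-b)\alpha$ is simply false in general, and with it the weak-type step collapses.

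The fix is the one Stein actually uses: subtract the average over the \emph{parent}. Let $\tilde R_j\in\cp$ be the parent of $R_j$ appearing in the proof of Lemma \ref{covering}; by the maximality of $R_j$ one has $\frac{1}{\rho(\tilde R_j)}\int_{\tilde R_j}|f|\,d\rho\le b\alpha$, hence $|f_{\tilde R_j}|\le b\alpha$ exactly. Then for any dyadic $R\ni x$ with $R\subseteq R_j$ and $\frac{1}{\rho(R)}\int_R|f|\,d\rho>\alpha$ you get $\frac{1}{\rho(R)}\int_R|f-f_{\tilde R_j}|\,d\rho>(1-b)\alpha$, so the weak type $(1,1)$ bound applied to $(f-f_{\tilde R_j})\chi_{R_j}$ at level $(1-b)\alpha$ gives a bound by
$$\frac{C}{(1-b)\alpha}\int_{R_j}\big|f-f_{\tilde R_j}\big|\,d\rho
\le \frac{C}{(1-b)\alpha}\int_{\tilde R_j}\big|f-f_{\tilde R_j}\big|\,d\rho
\le \frac{C}{(1-b)\alpha}\,c\alpha\,\rho\big(\tilde R_j\big)
\le \frac{2^nC\,c}{1-b}\,\rho(R_j),$$
where you use that $x_0\in R_j\subset\tilde R_j$ so that $\tilde R_j\in\crz(x_0)\cap\cp$ and $f^{\sharp}_{\cp}(x_0)\le c\alpha$ applies to $\tilde R_j$, together with Theorem \ref{t3.1}(iii) for $\rho(\tilde R_j)\le 2^n\rho(R_j)$. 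This is where the factor $2^n$ legitimately enters $K$. The remaining parts of your argument --- the reduction to $\rho(\Omega_{b\alpha})<\infty$, the localization via the nesting property of Theorem \ref{t3.1}(ii) and the maximality of the $R_j$, and the final summation over the disjoint family --- are correct.
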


By the relative distributional inequality (\ref{distr}) and arguing as
in {\cite[Corollary 1, p.\,154]{st93}}, we obtain the following
Fefferman--Stein type inequality. We also omit the details.

\begin{corollary}\label{FeffStein}
Let {$p\in(0,\infty)$.} There exists a positive constant $A_p$ such
that for any locally integrable function $f$ such that
$f^{\sharp}_{\cp}$ belongs to $L^p$ and {$\cm_{\cp}f \in L^{p_0}$}
with $p_0\le p$, then $f$ is in $L^p$ and
$$\| \cm_{\cp}f \|_{L^p}\le A_p\,\| f^{\sharp}_{\cp} \|_{L^p}.$$
\end{corollary}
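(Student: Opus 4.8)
The plan is to deduce Corollary \ref{FeffStein} from the relative distributional inequality \eqref{distr} by the standard good-$\lambda$ argument, following Stein \cite[Corollary 1, p.\,154]{st93}. The strategy is to integrate \eqref{distr} against $p\alpha^{p-1}\,d\alpha$ over $(0,\infty)$, so that the left side reproduces $\|\cm_\cp f\|_{L^p}^p$ (at least after a truncation argument) while the right side produces $\|\cm_\cp f\|_{L^p}^p$ again, but with a small coefficient, plus a term controlled by $\|f_\cp^\sharp\|_{L^p}^p$; absorbing the small multiple of $\|\cm_\cp f\|_{L^p}^p$ into the left side then gives the claim. The hypothesis that $\cm_\cp f\in L^{p_0}$ for some $p_0\le p$ is exactly what is needed to guarantee that $\rho(\Omega_\alpha)<\infty$ for every $\alpha>0$, which is the standing assumption in Lemma \ref{covering} and hence in \eqref{distr}; moreover, combined with the eventual conclusion, it also ensures that the truncated integrals one works with are finite, so the absorption step is legitimate.

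Here are the steps in order. First I would introduce, for $N\in\zz_+$, the truncated quantity
$$I_N\equiv p\int_0^N \alpha^{p-1}\,\rho\big(\{x\in S:\,\cm_\cp f(x)>\alpha\}\big)\,d\alpha,$$
which is finite for each $N$ because $\rho(\Omega_\alpha)\le\alpha^{-p_0}\|\cm_\cp f\|_{L^{p_0}}^{p_0}$ makes the integrand integrable near $0$ after noting $p_0\le p$, and the range of integration is bounded. Second, using \eqref{distr}, for fixed $b\in(0,1)$ and $c>0$ I would split
$$\rho(\{\cm_\cp f>\alpha\})\le \rho(\{\cm_\cp f>\alpha,\ f_\cp^\sharp\le c\alpha\})+\rho(\{f_\cp^\sharp>c\alpha\})\le \frac{Kc}{1-b}\,\rho(\{\cm_\cp f>b\alpha\})+\rho(\{f_\cp^\sharp>c\alpha\}).$$
Third, I would multiply by $p\alpha^{p-1}$ and integrate over $(0,N)$; a change of variables $\alpha\mapsto\alpha/b$ in the first term on the right turns $\int_0^N\alpha^{p-1}\rho(\{\cm_\cp f>b\alpha\})\,d\alpha$ into $b^{-p}\int_0^{bN}\alpha^{p-1}\rho(\{\cm_\cp f>\alpha\})\,d\alpha\le b^{-p}I_N$, and a change of variables in the second term gives $c^{-p}\|f_\cp^\sharp\|_{L^p}^p$ (using the layer-cake formula). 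This yields
$$I_N\le \frac{Kc}{(1-b)b^p}\,I_N + c^{-p}\,\|f_\cp^\sharp\|_{L^p}^p.$$

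Fourth, I would fix $b$ (say $b=1/2$) and then choose $c$ small enough, depending only on $n$, $p$ and $\|\cm_\cp\|_{L^1\to L^{1,\infty}}$ through $K$, so that $\theta\equiv \frac{Kc}{(1-b)b^p}\le \tfrac12$; since $I_N<\infty$ we may absorb to get $I_N\le \frac{c^{-p}}{1-\theta}\|f_\cp^\sharp\|_{L^p}^p\le 2c^{-p}\|f_\cp^\sharp\|_{L^p}^p$, a bound independent of $N$. Letting $N\to\infty$ and invoking the monotone convergence theorem together with the layer-cake formula $\|\cm_\cp f\|_{L^p}^p=p\int_0^\infty\alpha^{p-1}\rho(\{\cm_\cp f>\alpha\})\,d\alpha$ gives $\|\cm_\cp f\|_{L^p}\le A_p\|f_\cp^\sharp\|_{L^p}$ with $A_p\equiv(2c^{-p})^{1/p}$; finally, since $f(x)\le\cm_\cp f(x)$ for a.e.\ $x$, this also shows $f\in L^p$. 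The one genuine subtlety — and the step I expect to require the most care — is the justification that the absorption is legitimate, i.e.\ that $I_N$ is finite before we absorb: this is precisely where the auxiliary hypothesis $\cm_\cp f\in L^{p_0}$ with $p_0\le p$ enters, since for small $\alpha$ one has $\alpha^{p-1}\rho(\Omega_\alpha)\lesssim \alpha^{p-1-p_0}\|\cm_\cp f\|_{L^{p_0}}^{p_0}$, which is integrable near $0$ exactly when $p\ge p_0$ — everything else is a routine change of variables and the elementary layer-cake identity.
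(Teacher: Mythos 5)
Your proof is correct and is precisely the argument the paper intends: the paper itself omits the details and refers to the good-$\lambda$/truncation argument of Stein (Corollary 1, p.~154), which is exactly what you carry out, integrating the relative distributional inequality \eqref{distr} against $p\alpha^{p-1}\,d\alpha$, absorbing, and letting $N\to\infty$. One cosmetic remark: in the borderline case $p=p_0$ the bound $\alpha^{p-1-p_0}=\alpha^{-1}$ is not integrable near $0$, but there $I_N\le\|\cm_{\cp}f\|_{L^{p_0}}^{p_0}<\infty$ follows directly from the layer-cake formula, so the finiteness needed for the absorption step still holds.
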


\begin{remark}
Recall that for any locally integrable function $f$, $|f|\le
\cm_{\cp}f$ and $f^{\sharp}_{\cp}\le f^{\sharp}$. Thus, from
Corollary \ref{FeffStein}, we deduce that if $p\in {(0,\infty)}$,
$f^{\sharp}$ belongs to $L^p$ and $f$ belongs to some $L^{p_0}$
with $p_0\in (0, p]$, then $f$ is in $L^p$ and
\begin{equation}\label{FeffStein2}
\| f \|_{L^p}\le A_p\,\| f^{\sharp} \|_{L^p}\,,
\end{equation}
where $A_p$ is the constant which appears in Corollary
\ref{FeffStein}. This generalizes the classical Fefferman--Stein
inequality {\cite[Theorem 2, p.148]{st93}} to the current setting.
\end{remark}

We shall now introduce a dyadic Hardy space and a dyadic $\BMO$
space.

\begin{definition}\label{dyadic-hardy}
The {\it dyadic Hardy space} $H^{1}_{\cp}$ is defined
to be the space of all functions $g$ in $ L^1$ which
can be written as $g=\sum_j \lambda_j\, a_j$,
where $\{a_j\}_j$ are $H^1$-atoms supported in dyadic sets and
{$\{\lambda _j\}_j$} are complex numbers such that $\sum _j |\lambda
_j|<\infty$. Denote by $\|g\|_{H^{1}_{\cp}}$ the {\it infimum of
$\sum_j|\lambda_j|$ over all such decompositions}.
\end{definition}

\begin{definition}\label{dyadic-bmo}
The space $\mathcal{B}\mathcal{M}\mathcal{O}_{\cp}$ is the space of
all locally integrable functions $f$ such that $f^{\sharp}_{\cp}\in
L^ {\infty}$. The space $\BMO_{\cp}$ is the quotient of
$\mathcal{B}\mathcal{M}\mathcal{O}_{\cp}$ module constant functions.
It is a Banach space endowed with the norm $\|f\|_{\BMO_{\cp}}
\equiv\|f^{\sharp}_{\cp}\|_{L^\infty}$.
\end{definition}

It is easy to follow the proof in \cite[Theorem 3.4]{va1} to show
that the dual of $H^1_{\cp}$ is identified with $\BMO_{\cp}$.
We omit the details.

Obviously, $H^1_{\cp}\subset H^1$ and $\|g\|_{H^1}\le
\|g\|_{H^1_{\cp}}$ for all $g$ in $H^1_\cp$. It is
natural to ask whether the norms $\|\cdot\|_{H^1}$ and
$\|\cdot\|_{H^1_{\cp}}$ are equivalent. The analog problem in the
classical setting was studied by {Abu-Shammala and Torchinsky}
\cite{at}. By following the ideas in \cite{at}, we
obtain the following result.

\begin{theorem}\label{dyadic-hardy and hardy}
The norms $\|\cdot\|_{H^1}$ and $\|\cdot\|_{H^1_{\cp}}$ are not
equivalent.
\end{theorem}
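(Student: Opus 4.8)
The plan is to show that $\|\cdot\|_{H^1}$ and $\|\cdot\|_{H^1_\cp}$ cannot be equivalent by exhibiting a sequence of functions $\{g_k\}_k$, each lying in $H^1_\cp$ (hence in $H^1$), for which $\|g_k\|_{H^1}$ stays bounded while $\|g_k\|_{H^1_\cp}\to\infty$. Since we always have $H^1_\cp\subset H^1$ with $\|g\|_{H^1}\le\|g\|_{H^1_\cp}$, the failure of equivalence is exactly the failure of a reverse inequality $\|g\|_{H^1_\cp}\le C\|g\|_{H^1}$, and that is what the counterexample rules out. The basic mechanism, following Abu-Shammala and Torchinsky \cite{at}, is that an ordinary $H^1$-atom supported in a Calder\'on--Zygmund set $R\in\crz$ that is \emph{not} a dyadic set (i.e. $R\notin\cp$) can straddle the boundary between sets of the dyadic grid: when one tries to expand such an atom into dyadic atoms, the cancellation $\int a\,d\rho=0$ is destroyed on each individual dyadic piece, and one pays a logarithmic price to restore it, because the dyadic sets meeting $R$ may have measures ranging over many scales.

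Concretely, I would first use Theorem \ref{t3.1}, in particular properties (ii) and (v) together with Remark \ref{r3.1}(ii), to locate, for each large $k$, a Calder\'on--Zygmund set $R$ with the following feature: $R$ is the union of $\sim k$ dyadic Calder\'on--Zygmund sets $R^{(1)},\dots,R^{(k)}\in\cp$ whose measures are \emph{not} comparable but instead spread out geometrically, say $\rho(R^{(i)})\sim 2^{-i}\rho(R)$ for $1\le i\le k-1$ with $R^{(k)}$ the leftover of comparable size to $R^{(k-1)}$. Such configurations exist precisely because within a single level $\cp_j$ the measures of the constituent pieces of a given dyadic set need not be comparable --- this is the content of Theorem \ref{t3.1}(iv) and the remark following it --- and iterating the splitting of Lemma \ref{l2.1}(iii) produces a nested sequence of halvings. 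Then I would take $g_k$ to be (a normalization of) $\chi_{R^{(1)}}/\rho(R^{(1)}) - \chi_{R^{(k)}}/\rho(R^{(k)})$, or more flexibly a telescoping combination $g_k=\sum_{i=1}^{k-1}\big(\chi_{R^{(i)}}/\rho(R^{(i)})-\chi_{R^{(i+1)}}/\rho(R^{(i+1)})\big)\,c_i$ designed so that it is supported in $R$ with $\int g_k\,d\rho=0$; writing it either way, $g_k$ is a fixed multiple of an ordinary $H^1$-atom on $R$, so $\|g_k\|_{H^1}\le C$ uniformly in $k$.

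The heart of the argument is the lower bound $\|g_k\|_{H^1_\cp}\gtrsim k$ (or $\gtrsim\log k$, whichever the geometry yields; either suffices). For this I would test $g_k$ against a suitable element of the dual space $\BMO_\cp$. Since the dual of $H^1_\cp$ is $\BMO_\cp$ (as remarked after Definition \ref{dyadic-bmo}), it is enough to produce $\phi\in\BMO_\cp$ with $\|\phi\|_{\BMO_\cp}\le 1$ and $\big|\int g_k\,\phi\,d\rho\big|\gtrsim k$. A natural choice is a ``dyadic logarithm'' adapted to the grid: a function $\phi$ that is constant on each $R^{(i)}$, increasing by a bounded amount from one piece to the next (so that its dyadic sharp maximal function $\phi^\sharp_\cp$ stays bounded, using that on each dyadic set the oscillation of $\phi$ is controlled by the jump across one generation), but whose total variation across all $k$ pieces is $\sim k$. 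Pairing $\phi$ with the telescoping $g_k$ makes each term contribute a bounded amount with the same sign, yielding $\int g_k\phi\,d\rho\sim k$. I expect the main obstacle to be exactly the verification that $\phi\in\BMO_\cp$ with norm $O(1)$: one must check the oscillation bound $\frac{1}{\rho(R)}\int_R|\phi-\phi_R|\,d\rho\le C$ over \emph{every} $R\in\crz$ that is also in $\cp$ and contains a given point, not merely over the specially chosen $R^{(i)}$; here the non-comparability of measures within a level (Remark \ref{r3.1}(ii)) is a double-edged sword and the careful bookkeeping of how many grid pieces of each scale meet a test set is where the real work lies. Once $\phi$ is in hand, the conclusion that $\|g_k\|_{H^1_\cp}\gtrsim k\to\infty$ while $\|g_k\|_{H^1}=O(1)$ gives the non-equivalence, completing the proof.
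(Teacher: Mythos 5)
Your overall strategy coincides with the paper's: exhibit functions with uniformly bounded $H^1$ norm whose $H^1_\cp$ norms diverge, detecting the latter by pairing against an explicit element of the dual space $\BMO_\cp$; and your opening intuition (an atom straddling a boundary of the grid) is also the right one. However, the concrete construction in your second and third paragraphs does not implement that intuition, and the numbers do not work out. If $g_k$ is to be a bounded multiple of a single $H^1$-atom supported in $R$, then $\|g_k\|_{L^\infty}\le C/\rho(R)$, so the coefficient of $\chi_{R^{(i)}}$ in $g_k$ has absolute value at most $C/\rho(R)$; pairing with a $\phi$ equal to a constant $\phi^{(i)}$ on $R^{(i)}$ with $|\phi^{(i)}|\lesssim i$ then gives $|\int_S g_k\phi\,d\rho|\le C\sum_i \rho(R^{(i)})\,|\phi^{(i)}|/\rho(R)\lesssim\sum_i i\,2^{-i}<\infty$: the geometric decay of the measures exactly cancels the linear growth of $\phi$, so the pairing stays bounded and no lower bound $\|g_k\|_{H^1_\cp}\gtrsim k$ (or $\log k$) follows. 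Conversely, the unnormalized $\chi_{R^{(1)}}/\rho(R^{(1)})-\chi_{R^{(k)}}/\rho(R^{(k)})$ violates the size condition $\|a\|_{L^\infty}\le[\rho(R)]^{-1}$ by a factor $\sim 2^{k}$, so it is not ``a fixed multiple of an ordinary $H^1$-atom'' and its $H^1$ norm is not bounded uniformly in $k$. There is also a structural obstruction: a nested family of halvings of a single $R\in\crz$ produced by iterating Lemma \ref{l2.1}(iii) is equally available inside the grid $\cp$, so such a configuration cannot by itself separate $H^1_\cp$ from $H^1$; the non-comparability of measures within one level (Remark \ref{r3.1}(ii)) is a red herring for this theorem.

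The paper's construction uses instead a hyperplane $\{x_1=2^{\ell_0}\}$ that remains a boundary of the grid at arbitrarily small scales: for each $j<0$ it produces two \emph{congruent adjacent} dyadic sets $R_j, E_j\in\cp_{k_j}$ of \emph{equal} measure on opposite sides of that hyperplane, whose least common dyadic ancestor is vastly larger than either. The test functions are $a_j\equiv\frac{1}{2\rho(R_j)}(\chi_{R_j}-\chi_{E_j})$, which the paper shows satisfy $\|a_j\|_{H^1}\le 1$, and the single dual function is the genuinely unbounded $\phi(x,t)\equiv\chi_{(2^{\ell_0},\infty)}(x)\log(x-2^{\ell_0})$, which lies in $\BMO_\cp$ precisely because small sets of the grid never cross the singular hyperplane. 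Since $\phi\equiv 0$ on $R_j$ while its average over $E_j$ is $\sim \ell_j\log 2$, the pairing is $\sim|\ell_j|\to\infty$. The two features your construction is missing are exactly these: the two halves of the atom must have equal (comparable) measure, so that the atom normalization does not annihilate the pairing; and the $\BMO_\cp$ function must grow like the logarithm of the \emph{scale} at a persistent grid boundary, rather than by bounded increments over generations whose measures decay geometrically. To repair the proof, replace your family $\{R^{(i)}\}$ by such a mirror pair across a persistent boundary of $\cp$ and your ``dyadic logarithm'' by the logarithm of the distance to that boundary, as in \cite{at}.
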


\begin{proof}
We give the details of the proof in the case when $n=1$.
By the construction of
$\cp$ in Theorem \ref{t3.1}, there exists $[0,2^{\ell_0+1})\times
[0,2{r_0})\in\cp_{k_0+1}$ for some $k_0\in\zz$, $\ell_0\in\zz$
and $r_0>0$ such that $R_0\equiv[0,2^{\ell_0})\times
[0,2{r_0})\in \cp_{k_0}$ and $E_0\equiv[2^{\ell_0}, 2\cdot
2^{\ell_0})\times [0,2{r_0})\in \cp_{k_0}$. Generally,  for any
$j<0$, there exist
$R_j=[2^{\ell_0}-2^{\ell_j},2^{\ell_0})\times I_j\in\cp_{k_j}$ and
$E_j=[2^{\ell_0},2^{\ell_0}+2^{\ell_j})\times I_j\in\cp_{k_j}$ such
that $R_j\cup E_j\in \cp_{k_j+1}$, where both $\{k_j\}_{j<0}$ and
$\{\ell_j\}_{j<0}$ are strictly decreasing sequences which tend to
$-\infty$ as $j\to-\infty$, and each $I_j$ is an interval contained in
$[0,\infty)$. Notice that for all $j\in\nn$,
$\rho(R_j)=\rho(E_j)=2r_j2^{\ell_j}$ for some $r_j>0$.
Set $a_j \equiv \frac 1{2\rho(R_j)} (\chi_{R_j}-\chi_{E_j})$.
Obviously, each $a_j$
is an $H^1$-atom and $\|a_j\|_{H^1}\le 1$.

Take the function $\phi(x,t)\equiv\chi_{(2^{\ell_0},\infty)}(x)
\log(x-2^{\ell_0})\equiv h(x)$ for all $(x,t)\in S$.
An easy calculation gives that
$$\|\phi\|_{\BMO_\cp}\le \sup_{\gfz{I\subset\rr }
{ I\, \mathrm{is\, a\, dyadic\, interval}} } \frac1{|I|}\int_I
\lf|h(x)-\frac1 {|I|}\int_I h(y)\,dy\r|\,dx<\infty.$$ We then have
\begin{equation*}
 \begin{aligned}
\|a_j\|_{H^1_{\cp}} & = \sup_{\psi\in \BMO_{\cp}}
\frac 1{\|\psi\|_{\BMO_{\cp}}}\lf|\int_S a_j\,\psi\, d\rho\r|\\
&\ge \frac 1{\|\phi\|_{\BMO_{\cp}}}\lf|\int_S a_j\,\phi\, d\rho\r|\\
&=\frac 1{2\|\phi\|_{\BMO_{\cp}}}\lf|2^{-\ell_j}\int_{2^{\ell_0}}^{
2^{\ell_0}+2^{\ell_j}}\,\log(x-2^{\ell_0})\,dx\r|
=\frac{|1-\log 2^{\ell_j}|}{2\|\phi\|_{\BMO_{\cp}}} \sim
|\ell_j|\,.
\end{aligned}
\end{equation*}
So there exists no positive constant such that
$\|a_j\|_{H^1_{\cp}}\le C\|a_j\|_{H^1}$ for all $j<0$.
\end{proof}

Notice that all the arguments of \cite[Section 5]{va}
can be adapted to the dyadic spaces $H^1_{\cp}$ and $\BMO_{\cp}$
such that all results therein also hold for $H^1_{\cp}$ and $\BMO_{\cp}$. In
particular, one can prove that, though $H_\cp^1$ is a proper
subspace of $H^1$, the real interpolation space
$[H^1_{\cp},L^2]_{\theta,q}$ is equal to $L^q$, if $\theta\in (0,1)$
and $\frac{1}{q}=1-\frac{\theta}{2}$.

\section{Complex interpolation}

We now formulate an interpolation theorem involving $H^1$ and $\BMO$.
In the following, when $A$ and $B$ are Banach spaces and $\theta$ is
in $(0,1)$, we denote by $(A,B)_{[\theta]}$ the complex
interpolation space between $A$ and $B$ with parameter $\theta$,
obtained via Calder\'on's complex interpolation method
(see \cite{Ca, bl}).

\begin{theorem}\label{int1}
Suppose that $\theta$ is in $(0,1)$. Then the following hold:
\begin{itemize}
\vspace{-0.25cm}
 \item[(i)] if $\frac{1}{p_{\theta}}= \frac{1-\theta}{2}$,
 then $(L^2,\BMO)_{[\theta]}=L^{p_{\theta}};$
 \vspace{-0.25cm}
\item[(ii)] if $\frac{1}{q_{\theta}}=1- \frac{\theta}{2}$,
then $(H^1,L^2)_{[\theta]}=L^{q_{\theta}}$.
\end{itemize}
\end{theorem}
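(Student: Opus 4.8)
\textbf{Proof proposal for Theorem \ref{int1}.}
The plan is to deduce the complex interpolation identities from the Fefferman--Stein inequality \eqref{FeffStein2}, the duality $(H^1)^*=\BMO$, and the standard interpolation machinery, following the classical scheme of Fefferman--Stein \cite{fest} and Stein \cite[Chapter IV]{st93}. First I would prove (i). The easy inclusion is $L^{p_\theta}\subset(L^2,\BMO)_{[\theta]}$: since $L^2$ and $L^\infty$ both embed continuously into $\BMO$ (any $L^\infty$ function has bounded sharp maximal function, and $L^2\hookrightarrow\BMO$ follows because constants have sharp function zero and $\|f-f_R\|$ is controlled using that $\rho(R)$ is bounded below on sets containing a fixed point? — here one uses instead the weaker fact that $L^2$-functions lie in $\mathcal{BMO}$ with a bound, as noted in \cite{va}), a simple complex interpolation of the pair $(L^2,L^\infty)$ together with $(L^2,L^\infty)_{[\theta]}=L^{p_\theta}$ and the continuous inclusion $L^\infty\subset\BMO$ gives $L^{p_\theta}\subset(L^2,\BMO)_{[\theta]}$ with norm control. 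The substantive inclusion is $(L^2,\BMO)_{[\theta]}\subset L^{p_\theta}$. For this I would take $f$ in the interpolation space, represented by an analytic function $F(z)$ on the strip with $F(\theta)=f$, $F(it)\in L^2$ and $F(1+it)\in\BMO$ uniformly; then apply the sharp maximal operator and the Fefferman--Stein inequality \eqref{FeffStein2}. Concretely, one checks that $z\mapsto F(z)^\sharp$ (or rather a suitable vector-valued analytic family built from $F$) interpolates between $L^2$ and $L^\infty$, so that $\|f^\sharp\|_{L^{p_\theta}}\lesssim \|F\|$, and then \eqref{FeffStein2} upgrades this to $\|f\|_{L^{p_\theta}}\lesssim\|f^\sharp\|_{L^{p_\theta}}$, provided one first knows $f\in L^{p_0}$ for some $p_0\le p_\theta$; this a priori integrability is obtained by a density/truncation argument, approximating $f$ by elements that are simultaneously in $L^2$ and in the interpolation space.

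For part (ii) I would argue by duality. One knows $(H^1)^*=\BMO$ and $(L^2)^*=L^2$; by the duality theorem for Calder\'on's complex method (\cite[Theorem 4.5.1]{bl}, valid when at least one of the spaces is reflexive, which $L^2$ is, and the couple is regular in the sense that $H^1\cap L^2$ is dense in both), we have $\big((H^1,L^2)_{[\theta]}\big)^* = (\,(H^1)^*,(L^2)^*\,)_{[\theta]} = (\BMO,L^2)_{[\theta]} = (L^2,\BMO)_{[1-\theta]} = L^{p_{1-\theta}}$, where the last equality is part (i). Since $\frac1{p_{1-\theta}} = \frac{\theta}{2}$, the dual of $(H^1,L^2)_{[\theta]}$ is $L^{r}$ with $\frac1r = \frac{\theta}{2}$, i.e. $\frac1{r'} = 1-\frac\theta2 = \frac1{q_\theta}$; since $L^{q_\theta}$ is reflexive for $q_\theta\in(1,2)$, taking preduals (and checking that $(H^1,L^2)_{[\theta]}$ is itself reflexive, which follows once we know it is a closed subspace of, or equal to, a reflexive $L^q$, or alternatively by reiteration) yields $(H^1,L^2)_{[\theta]}=L^{q_\theta}$. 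Alternatively, and perhaps more cleanly, I would avoid the reflexivity subtleties by proving the two inclusions directly: $L^{q_\theta}\subset(H^1,L^2)_{[\theta]}$ by interpolating the pair $(L^1,L^2)$ — using that $H^1\hookrightarrow L^1$ with the wrong direction, so in fact one needs the Hardy-space atomic structure: build, for $g\in L^{q_\theta}$, an analytic family $G(z)$ of the form $G(z)=\sum_j|\lambda_j|^{\,\alpha(z)}(\cdots)a_j$ out of an atomic-type decomposition adapted to the level sets of $g$, exactly as in the Euclidean case \cite{fest}; and the reverse inclusion $(H^1,L^2)_{[\theta]}\subset L^{q_\theta}$ by the duality pairing against $(L^2,\BMO)_{[1-\theta]}=L^{p_{1-\theta}}$ established in (i), since $\langle g,\psi\rangle$ is controlled for all $\psi$ in that space.

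The main obstacle, I expect, is the inclusion $(L^2,\BMO)_{[\theta]}\subset L^{p_\theta}$, and within it the a priori integrability of the interpolation-space element: the Fefferman--Stein inequality \eqref{FeffStein2} as stated requires $f\in L^{p_0}$ for some $p_0\le p_\theta$ before it can conclude $f\in L^{p_\theta}$, so one cannot apply it blindly to an arbitrary element of the abstract interpolation space. The remedy is the standard one — work first with the dense subspace of "nice" functions (for instance $L^2\cap\BMO$ with the interpolation-space topology, which is dense by definition of Calder\'on's method), for which membership in $L^2\subset L^{p_0}$ with appropriate $p_0$ is automatic, prove the norm inequality $\|f\|_{L^{p_\theta}}\le C\|f\|_{(L^2,\BMO)_{[\theta]}}$ there via the analytic-family argument plus \eqref{FeffStein2}, and then extend by density. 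A secondary technical point is verifying that the family of sharp maximal functions (or the auxiliary analytic family one constructs) genuinely satisfies the hypotheses of the complex interpolation of $L^p$ spaces; here the absence of a doubling condition is irrelevant, because \eqref{FeffStein2} already encapsulates everything we need about the geometry of $(S,d,\rho)$, and the rest is the same soft functional analysis as in \cite[Chapter IV, \S 3.6]{st93}. For this reason I would present the proof as: (a) quote \eqref{FeffStein2} and the duality $(H^1)^*=\BMO$; (b) prove (i) by the two inclusions above, being careful about density; (c) deduce (ii) by duality, or by mimicking (b) with atomic analytic families; and relegate the routine verifications to references \cite{fest, st93, bl}.
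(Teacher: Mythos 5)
Your proposal follows essentially the same route as the paper: part (i) is the classical Fefferman--Stein argument via the sharp-function inequality \eqref{FeffStein2} (the paper simply cites \cite{fest} and \cite[Theorem 7.4]{cmm} for this adaptation), and part (ii) is deduced by duality from (i) together with the embedding $H^1\hookrightarrow L^1$, the identity $(L^1,L^2)_{[\theta]}=L^{q_\theta}$, and the reflexivity of $(H^1,L^2)_{[\theta]}$, exactly as in the paper. Two minor caveats: your aside that $L^2$ embeds into $\BMO$ is false and also unnecessary (interpolating the couple $(L^2,L^\infty)$ and using only $L^\infty\subset\BMO$ suffices for the easy inclusion), and the reflexivity of $(H^1,L^2)_{[\theta]}$ should be justified as in \cite[Section 4.9]{bl} from the reflexivity of the endpoint $L^2$, rather than by the circular ``closed subspace of $L^{q_\theta}$'' remark.
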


\begin{proof}
 The proof of (i) is an easy adaptation of the proof of
{\cite[p.156,\,Corollary 2]{fest}} and of
\cite[Theorem 7.4]{cmm}. We omit the
details.

The proof of (ii) follows from a duality argument
(see \cite[Theorem 7.4]{cmm}). Denote by $X_{\theta}$
the interpolation space $\bigl(H^1, L^2 \bigr)_{[\theta]}$.
Now by the duality theorem \cite[Corollary 4.5.2]{bl}, if
$\frac{1}{q_{\theta}}=1-\frac{\theta}{2}$, then the dual of
$X_{\theta}$ is $\bigl(L^2,\BMO\bigr)_{[\theta]}$, which is equal to
$L^{q_\theta'}$ by (i), where $\frac 1{q_\tz}+\frac 1{q_\tz'}=1$.
Furthermore, $X_\theta$ is continuously
included in $L^{q_\theta}$, because $H^1$ is continuously included
in $L^1$ and $\bigl(L^1,L^2\bigr)_{[\theta]} =
L^{q_\theta}$. Since $L^2$ is reflexive, the interpolation space
$X_\theta$ is reflexive (see \cite[Section~4.9]{bl}), so that
$X_{\theta}$ is isomorphic to $X_\theta^{**} =
\bigl(L^{q_\theta'}\bigr)^*=L^{q_\theta}$. This concludes the
proof.
\end{proof}

A consequence of the previous theorem is the following.
\begin{theorem}\label{t5.2}
Denote by $\Sigma$ the closed strip $\{ s\in \cc: \Re s\in [0,1] \}
$. Suppose that $\{T_s\}_{s\in\Sigma}$ is a family of uniformly
bounded operators on $L^2$ such that the map $s\to \int_S
T_s(f)g\, d\rho$ is continuous on $\Sigma$ and analytic in the
interior of $\Sigma$, whenever $f,g\in L^2$. Moreover, assume that
there exists a positive constant $A$ such that
$$\|T_{it} f\|_{L^2}\le A \,\|f\|_{L^2}\qquad {\forall\ } f\in L^2,\,
{\forall\ } t\in\rr\,,$$
and
$$\|T_{1+it} f\|_{\BMO}\le A \,\|f\|_{\infty}\qquad {\forall\ } f\in
L^2\cap L^{\infty},\,{\forall\ } t\in\rr\,.$$
Then for every $\theta\in (0,1)$, the operator $T_{\theta}$ is
bounded on $L^{p_{\theta}}$, with
$\frac{1}{p_{\theta}}=\frac{1-\theta}{2}$ and
$$\|T_{\theta} f\|_{L^{p_{\theta}}}\le A_{\theta}
\,\|f\|_{L^{p_{\theta}}}\qquad {\forall \ } f\in L^2\cap
L^{p_{\theta}}\,.$$
Here $A_{\theta}$ depends only on $A$ and $\theta$.
\end{theorem}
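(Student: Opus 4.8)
The plan is to deduce Theorem \ref{t5.2} from the complex interpolation identities in Theorem \ref{int1} by applying Calder\'on's abstract interpolation theorem for analytic families of operators to the pair $(L^2,\BMO)$. First I would recall the abstract statement (see \cite{Ca} or \cite[Theorem 4.1.2 and the comments on analytic families]{bl}): if $\{T_s\}_{s\in\Sigma}$ is an admissible analytic family and $T_{it}$ maps $A_0$ to $B_0$ boundedly (uniformly in $t$) while $T_{1+it}$ maps $A_1$ to $B_1$ boundedly (uniformly in $t$), then for $\theta\in(0,1)$ the operator $T_\theta$ maps $(A_0,A_1)_{[\theta]}$ boundedly into $(B_0,B_1)_{[\theta]}$, with operator norm controlled by the two endpoint bounds and $\theta$ alone. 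Here I take $A_0=B_0=L^2$ and $A_1=L^\infty$ on the domain side, $B_1=\BMO$ on the target side; the hypotheses of the theorem are exactly the uniform-in-$t$ boundedness conditions $\|T_{it}f\|_{L^2}\le A\|f\|_{L^2}$ and $\|T_{1+it}f\|_{\BMO}\le A\|f\|_{L^\infty}$, and the stated continuity/analyticity of $s\mapsto\int_S T_s(f)g\,d\rho$ together with uniform $L^2$-boundedness is precisely the admissibility required to run the three-lines argument.

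Next I would identify the interpolation spaces. On the target side, Theorem \ref{int1}(i) gives $(L^2,\BMO)_{[\theta]}=L^{p_\theta}$ with $\tfrac{1}{p_\theta}=\tfrac{1-\theta}{2}$. On the domain side I need $(L^2,L^\infty)_{[\theta]}$; since $L^\infty$ is not the most convenient endpoint for complex interpolation directly, the cleanest route is to observe that $(L^2,L^\infty)_{[\theta]}=L^{p_\theta}$ with the same exponent (this is the classical Riesz--Thorin scale, valid for the measure space $(S,\rho)$), so the domain and target interpolation exponents match. Alternatively, one can run the argument on a dense subspace: for $f\in L^2\cap L^{p_\theta}$ one builds the analytic $L^2\cap L^\infty$-valued family $f_z$ realizing $f$ as the value at $z=\theta$ of the Calder\'on construction for $L^{p_\theta}=(L^2,L^\infty)_{[\theta]}$, applies $T_z$ to it, and reads off from the target-side identity that $T_\theta f\in L^{p_\theta}$ with the right bound. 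Either way, the conclusion is that $T_\theta$ is bounded from $L^{p_\theta}$ to $L^{p_\theta}$ on the dense subspace $L^2\cap L^{p_\theta}$, with constant $A_\theta$ depending only on $A$ and $\theta$.

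The main technical point to be careful about is that $\BMO$ here is a quotient space (modulo constants), so $T_{1+it}$ lands in $\BMO$ only as an equivalence class; to feed this into the abstract machinery one must check that the analytic family, composed with the quotient map, behaves well, i.e.\ that the relevant duality/three-lines estimates are insensitive to the choice of representative. This is handled exactly as in \cite[Theorem 7.4]{cmm}: one works with the $\BMO$-norm, which is well-defined on the quotient, and uses that the interpolation functor is unchanged when one of the endpoint spaces is replaced by a quotient by a subspace that also sits inside the interpolation space appropriately; concretely, constants belong to every $L^p$ only in the trivial way on a finite-measure piece, but on $(S,\rho)$ one instead notes that the pairing $\int_S T_s(f)g\,d\rho$ used to test membership in $L^{p_\theta}$ is performed against $g$ in the predual and is well-defined regardless of the representative. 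I expect this bookkeeping around the quotient structure of $\BMO$ to be the only real obstacle; everything else is a direct invocation of Theorem \ref{int1} and the standard analytic interpolation theorem, so I would state the reduction explicitly and refer to \cite{Ca, bl, cmm} for the routine verifications, writing out only the identification of the two interpolation scales and the dependence of $A_\theta$ on $A$ and $\theta$.
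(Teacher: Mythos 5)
Your argument is essentially the paper's own proof: the authors deduce the theorem from Theorem \ref{int1}(i) combined with the abstract interpolation theorem for analytic families of operators, citing Cwikel--Janson \cite[Theorem 1]{cj} (with Stein's direct three-lines argument via the sharp-function inequality as a stated alternative), and they leave exactly the identifications you spell out --- $(L^2,L^\infty)_{[\theta]}=L^{p_\theta}$ on the domain side and $(L^2,\BMO)_{[\theta]}=L^{p_\theta}$ on the target side --- to the reader. The one small correction is the reference for the analytic-family version of the complex method: it is not in Calder\'on \cite{Ca} or Bergh--L\"ofstr\"om \cite{bl} in the generality required, which is precisely why the paper invokes \cite{cj}.
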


\begin{proof}
This follows from Theorem \ref{int1}(i) and \cite[Theorem 1]{cj}.
Alternatively, we may follow the proof of \cite[p.\,175, Theorem 4]{st93}.
We leave the details to the reader.
\end{proof}

\begin{theorem}\label{t5.3}
Denote by $\Sigma$ the closed strip $\{ s\in \cc:\ \Re s\in [0,1] \}
$. Suppose that $\{T_s\}_{s\in\Sigma}$ is a family of uniformly
bounded operators on $L^2$ such that the map $s\to \int_S
T_s(f)g\, d\rho$ is continuous on $\Sigma$ and analytic in the
interior of $\Sigma$, whenever $f,g\in L^2$. Moreover, assume that
there exists a positive constant $A$ such that
$$\|T_{it} f\|_{L^1}\le A \,\|f\|_{H^1}\qquad {\forall\ } f\in
L^2\cap H^1,\,{\forall\ } t\in\rr\,,$$
and
$$\|T_{1+it} f\|_{{L^2}}\le A \,\|f\|_{{L^2}}\qquad {\forall\  } f\in
L^2,\,{\forall\ } t\in\rr\,.$$
Then for every $\theta\in (0,1)$, the operator $T_{\theta}$ is
bounded on $L^{q_{\theta}}$, with
$\frac{1}{q_{\theta}}=1-\frac{\theta}{2}$ and
$$
\|T_{\theta} f\|_{L^{q_{\theta}}}\le A_{\theta}
\,\|f\|_{L^{q_{\theta}}}\qquad {\forall\ } f\in L^2\cap
L^{q_{\theta}}\,.
$$
Here $A_{\theta}$ depends only on $A$ and $\theta$.
\end{theorem}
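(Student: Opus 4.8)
The plan is to deduce Theorem \ref{t5.3} from Theorem \ref{int1}(ii) by a standard application of the interpolation theorem for analytic families of operators (Stein's interpolation theorem), exactly as Theorem \ref{t5.2} is deduced from Theorem \ref{int1}(i). First I would fix $\theta\in(0,1)$ and set $\frac{1}{q_\theta}=1-\frac{\theta}{2}$, so that $q_\theta\in(1,2)$. The hypotheses say precisely that the analytic family $\{T_s\}_{s\in\Sigma}$ maps the ``left edge'' $\Re s=0$ boundedly from $H^1$ to $L^1$ (with uniform bound $A$) and the ``right edge'' $\Re s=1$ boundedly from $L^2$ to $L^2$ (with uniform bound $A$), together with the admissibility condition that $s\mapsto\int_S T_s(f)g\,d\rho$ is continuous on $\Sigma$ and analytic in its interior for $f,g\in L^2$. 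Since $(H^1,L^2)_{[\theta]}=L^{q_\theta}$ by Theorem \ref{int1}(ii) and $(L^1,L^2)_{[\theta]}=L^{q_\theta}$ as well (the classical Calder\'on interpolation for $L^p$ spaces), the abstract complex interpolation theorem for analytic families of operators yields that $T_\theta$ is bounded on $L^{q_\theta}$ with operator norm controlled by $A_\theta=A_\theta(A,\theta)$, which is the desired conclusion.

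More concretely, I would invoke the version of Stein's analytic interpolation theorem stated for the Calder\'on complex interpolation functor — as in \cite[Theorem 1]{cj} (used already in the proof of Theorem \ref{t5.2}) or, alternatively, follow the self-contained argument in \cite[p.\,175, Theorem 4]{st93}. The point is that the complex method $(\cdot,\cdot)_{[\theta]}$ interacts well with analytic families: if $\{T_s\}$ is admissible and bounded $A_0\to B_0$ on $\Re s=0$ and $A_1\to B_1$ on $\Re s=1$, then $T_\theta$ is bounded $(A_0,A_1)_{[\theta]}\to(B_0,B_1)_{[\theta]}$. Applying this with $(A_0,A_1)=(H^1,L^2)$ and $(B_0,B_1)=(L^1,L^2)$ gives boundedness of $T_\theta$ from $(H^1,L^2)_{[\theta]}=L^{q_\theta}$ to $(L^1,L^2)_{[\theta]}=L^{q_\theta}$. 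The restriction to $f\in L^2\cap L^{q_\theta}$ in the statement is just the natural dense domain on which all the operators $T_s$ are simultaneously defined via the $L^2$-boundedness hypothesis, so the estimate there suffices by density.

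The only genuinely nontrivial input is Theorem \ref{int1}(ii), namely the identification $(H^1,L^2)_{[\theta]}=L^{q_\theta}$, and that has already been established. Everything else is the machinery of complex interpolation of analytic families, which is classical and setting-independent: it requires nothing about the group $S$ beyond the fact that we are working with $\sigma$-finite measure spaces, so it transfers verbatim. Thus I anticipate no real obstacle; the proof is a one-line deduction plus a citation, and I would write:

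\begin{proof}
This follows from Theorem \ref{int1}(ii) and \cite[Theorem 1]{cj},
together with the classical fact that $\bigl(L^1,L^2\bigr)_{[\theta]}=L^{q_\theta}$
when $\frac{1}{q_\theta}=1-\frac{\theta}{2}$. Indeed, the hypotheses state that
the admissible analytic family $\{T_s\}_{s\in\Sigma}$ is bounded from $H^1$ to
$L^1$ on the line $\Re s=0$ and from $L^2$ to $L^2$ on the line $\Re s=1$, with
uniform bound $A$ in both cases; hence, by complex interpolation of analytic
families of operators, $T_\theta$ is bounded from $\bigl(H^1,L^2\bigr)_{[\theta]}$
to $\bigl(L^1,L^2\bigr)_{[\theta]}$, that is, from $L^{q_\theta}$ to $L^{q_\theta}$,
with operator norm bounded by a constant $A_\theta$ depending only on $A$ and
$\theta$. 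Alternatively, one may follow the argument in \cite[p.\,175, Theorem 4]{st93}.
We leave the remaining details to the reader.
\end{proof}
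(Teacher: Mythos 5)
Your proposal is correct and follows exactly the route of the paper: the authors likewise deduce Theorem \ref{t5.3} from Theorem \ref{int1}(ii) together with the Cwikel--Janson interpolation theorem for analytic families \cite[Theorem 1]{cj}, omitting the details you have spelled out (the identification $(L^1,L^2)_{[\theta]}=L^{q_\theta}$ for the target spaces and the density argument). No discrepancies to report.
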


\begin{proof}
This follows from Theorem \ref{int1}(ii) and \cite[Theorem 1]{cj}.
We omit the details.
\end{proof}

\begin{remark}\label{dyadic-complex-interpolation}\rm
It is easy to see that Theorems 5.1, 5.2 and 5.3 still hold if $H^1$
and $\BMO$ are replaced by $H_\cp^1$ and $\BMO_\cp$, respectively.
We leave the details to the reader.
\end{remark}

\medskip

{\small\noindent{\bf Acknowledgments}\quad
Maria Vallarino is partially supported by PRIN 2007 ``Analisi Armonica"
and Dachun Yang (the corresponding author) is supported by the National
Natural Science Foundation (Grant No. 10871025) of China.

Also, the authors sincerely wish to express their
deeply thanks to the referee for her/his very carefully reading and
also her/his so many careful, valuable and suggestive remarks which
essentially improve the presentation of this article.}

\end{document}